\newcommand{\conv}[1]{\operatorname{conv}#1}
\newtheorem{theorem}{Theorem}[section]
\newtheorem{lemma}{Lemma}[section]
\newtheorem{cor}[lemma]{Corollary}
\newtheorem{prop}[lemma]{Proposition}
\renewcommand{\le}{\leqslant}
\renewcommand{\ge}{\geqslant}
\renewcommand{\leq}{\leqslant}
\renewcommand{\geq}{\geqslant}
\newcommand{\R}{\mathbb R}
\newcommand{\parag}[1]{\vspace{2mm}

\noindent{\bf #1} }
\newcommand{\remark}
      {\medskip\noindent {\bf Remark:\hspace{0em}}}
\newenvironment{proof}
      {\medskip\noindent{\bf Proof:}\hspace{1mm}}
      {\hfill$\Box$\medskip}
\def\qed{\ifvmode\mbox{ }\else\unskip\fi\hskip 1em plus 10fill$\Box$}
\newenvironment{proofof}[1]
      {\medskip\noindent{\bf Proof of #1:}\hspace{1mm}}
      {\hfill$\Box$\medskip}
\def\Ddots{\mathinner{\mkern1mu\raise\p@
\vbox{\kern7\p@\hbox{.}}\mkern2mu
\raise4\p@\hbox{.}\mkern2mu\raise7\p@\hbox{.}\mkern1mu}}
\def\R{\mathbb R}
\def\F{\mathbb F}
\def\F{\mathcal{F}}
\title{\vspace{-0.7cm}Convex polytopes from fewer points}
\author{Cosmin Pohoata \and Dmitrii Zakharov}
\date{}
\begin{document}
\maketitle

\begin{abstract}
Let $ES_{d}(n)$ be the smallest integer such that any set of $ES_{d}(n)$ points in $\mathbb{R}^{d}$ in general
position contains $n$ points in convex position. In 1960, Erd\H{o}s and Szekeres showed that $ES_{2}(n) \geq 2^{n-2} + 1$ holds, and famously conjectured that their construction is optimal. This was nearly settled by Suk in 2017, who showed that $ES_{2}(n) \leq 2^{n+o(n)}$. In this paper, we prove that 
$$ES_{d}(n) = 2^{o(n)}$$
holds for all $d \geq 3$. In particular, this establishes that, in higher dimensions, substantially fewer points are needed in order to ensure the presence of a convex polytope on $n$ vertices, compared to how many are required in the plane.
\end{abstract}

\section{Introduction}
For $d \geq 2$, a set of points $X$ in $\mathbb{R}^{d}$ with $|X| \geq d+1$ is said to be in {\it{general position}} if no $d+1$ points from $X$ lie on the same $(d-1)$-dimensional hyperplane. A set of points $P$ is in {\it{convex position}} if the points from $P$ represent the vertices of a convex polytope. 

In their seminal 1935 paper, Erd\H{o}s and Szekeres \cite{ES35} proved that for every integer $n \geq 3$ there exists a smallest integer $ES_{2}(n)$ such that any set of $ES_{2}(n)$ points in the plane in general position must contain $n$ points in convex position. Their paper contains two different proofs for the existence of $ES_2(n)$, both of which have generated remarkable bodies of work in several directions over the years. Their first argument from \cite{ES35} showed that $ES_{2}(n) \leq R_{4}(5,n)$, and used a quantitative version of Ramsey’s Theorem (see \cite{Ramsey} or \cite[Theorem 4.18]{Jukna}) to obtain a rather poor bound for $ES_{2}(n)$. Here $R_{k}(s,n)$ denotes the standard $2$-color Ramsey number for $k$-uniform hypergraphs, namely the minimum $N$ such that every red-blue
coloring of the unordered $k$-tuples of an $N$-element set contains a red set of size $s$ or a blue set of size $n$, where a set is called red (blue) if all $k$-tuples from this set are red (blue). Their second argument was more geometric in nature and showed a much more refined estimate
$$ES_{2}(n) \leq f(n,n) = {2n-4 \choose n-2} + 1,$$
where $f(k,\ell)$ denotes the smallest integer $N$ such that any planar point set of size $N$ in general position must always contain a $k$-cup or an $\ell$-cap. We refer to \cite{Mat} for a nice exposition of both approaches. In 1960, Erd\H{o}s and Szekeres showed that $ES_{2}(n) \geq 2^{n-2} + 1$ holds, and famously conjectured that their construction is optimal. After a long series of improvements (e.g. \cite{CG98}, \cite{KP98}, \cite{TV98}, \cite{TV06}, \cite{MV16}), this was nearly settled by Suk \cite{Suk17} in 2017, where he showed that $ES_{2}(n) \leq 2^{n+o(n)}$. The best known quantitative bound is due to Holmsen, Mojarrad, Pach, and Tardos \cite{HMPT20}, who optimized (and generalized) the argument from \cite{Suk17} and showed that $ES_{2}(n) \leq 2^{n+O(n^{1/2} \log n)}$. Here and throughout the rest of the paper all asymptotic notation is in the $n \to \infty$ regime. 

Despite a lot of activity around this problem, the higher dimensional story has managed to remain quite mysterious during all this time. As Erd\H{o}s and Szekeres note themselves in \cite{ES35}, the existence of $ES_{d}(n)$ also follows from Ramsey's theorem, applied in the same vein as in their first proof. By Carath\'eodory's theorem \cite{Car07}, it is easy to see that for every $d \geq 2$, any configuration of $d+3$ points in general position in $\mathbb{R}^{d}$ must contain at least $d+2$ points in convex position, so the general estimate $ES_{d}(n) \leq R_{d+2}(d+3,n)$ holds. See also \cite{Dan} or \cite{Grun} for more detailed discussions. Similarly, this only yields a very modest quantitative upper bound for $ES_{d}(n)$, which in fact even becomes worse and worse as the dimension, and thus also the uniformity of the Ramsey number in question, increases. We refer to \cite{CFS10} and \cite{MS18} for the state of the art on these particular hypergraph Ramsey numbers (and several others). 

On the other hand, a simple projection argument, originally due to Valtr \cite{V96} (cf. \cite{MS00}), defies the implicit higher uniformity of the problem in $\mathbb{R}^{d}$. By considering a set of $ES_{d-1}(n)$ points in general position in $\mathbb{R}^{d}$, projecting onto a generic $(d-1)$-dimensional hyperplane, finding a convex subset inside the projection, and then ultimately lifting this set back to get a convex subset in the original configuration, it immediately follows that $ES_{d}(n) \leq ES_{d-1}(n)$ must hold for every $d \geq 3$, i.e. 
\begin{equation} \label{ES}
ES_{d}(n) \leq ES_{d-1}(n) \leq \ldots \leq ES_{2}(n).
\end{equation}
In particular, any upper bound for the two-dimensional problem yields an upper bound for the $ES_{d}(n)$, which means that Suk's theorem automatically implies that $ES_{d}(n) \leq 2^{n + o(n)}$ holds for all $d \geq 2$. The previously best known result for $d \geq 3$ is only a technical refinement of the above projection argument. By projecting onto a generic $(d-1)$-dimensional hyperplane from a fixed point of the configuration rather than from infinity, K\'arolyi \cite{K01} observed that $ES_{d}(n) \leq ES_{d-1}(n-1) + 1$ holds. Nevertheless, this clearly only gives an upper bound of the same (asymptotic) quality as \eqref{ES} for $ES_{d}(n)$ when $d \geq 3$. The question of whether $ES_{3}(n)$ could potentially be asymptotically smaller than $ES_{2}(n)$ has been raised by several researchers in various forms, and even conflicting conjectures have been proposed over the years. See for example \cite[Chapter 3.1, page 33]{Mat} and the beautiful survey \cite{MS00} for nice accounts.

In this paper, we address this problem and confirm that in higher dimensions substantially fewer points are needed in order to ensure the presence of a convex polytope on $n$ vertices, compared to how many are required in the plane. This is already true starting with $d=3$. 

Our main new result is in fact the following subexponential upper bound for the Erd\H{o}s-Szekeres function in $3$-space. 

\begin{theorem} \label{main}
For any $\epsilon > 0 $, there exists $n_{0}(\epsilon)$ such that for every $n \geq n_{0}(\epsilon)$, the following holds: if $X \subset \mathbb{R}^{3}$ is a set of points in general position with $|X| \geq 2^{\epsilon n}$, then $X$ must always contain $n$ points in convex position. In other words,
$$ES_{3}(n) = 2^{o(n)}.$$
\end{theorem}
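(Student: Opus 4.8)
The plan is to reduce the problem to locating a highly structured configuration whose convexity can be certified cheaply, without having to control all $\binom{|X|}{4}$ orientations simultaneously. The starting observation is that any finite set of points lying on a strictly convex surface---the prototype being an elliptic paraboloid $z=x^2+y^2$---is automatically in convex position, since every boundary point of a strictly convex body is extreme. I would therefore not aim for an arbitrary convex $n$-gon, but for a configuration lying (approximately) on such a surface. This is precisely the higher-dimensional analogue of the fact that in the plane one does not hunt for an arbitrary convex polygon but for a pure cup or cap, which is exactly what makes the cup--cap recursion behind $f(n,n)$ so efficient.

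The structural gain I would try to exploit is that a paraboloid carries a genuine product structure: the points $(a_i,b_j,a_i^2+b_j^2)$ with $a_1<\dots<a_s$ and $b_1<\dots<b_t$ give $st$ points in convex position whose two ``directions'' are each governed by a one-dimensional convexity condition. Thus a convex grid of size $\sqrt n\times\sqrt n$ already yields $n$ points in convex position. The core of the argument would be a two-level Erd\H{o}s--Szekeres scheme: slice the $x$-axis into blocks, within each block locate a planar convex sub-configuration (a cup or a cap) of size $t$ in the $(y,z)$-projection, at cost exponential in $t$ by the planar estimate $f(t,t)$, and across blocks select $s$ of them whose cross-sections stack up convexly, at cost exponential in $s$, so that the union of the chosen points lies on one common convex surface. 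If the two searches can be arranged so that their costs combine additively in the exponent while their outputs combine multiplicatively in size, one obtains a convex configuration of size $st$ from $2^{O(s+t)}$ points; taking $s=t=\sqrt n$ then gives $ES_{3}(n)\le 2^{O(\sqrt n)}=2^{o(n)}$.

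Two places concentrate the real difficulty. The first is a geometric certification lemma: I must show that a ``cup of caps'' assembled hierarchically---convex cross-sections progressing convexly from block to block---genuinely lies on a convex surface, hence is in convex position, rather than merely looking convex in each separate projection. This is the subtlety to be most wary of, since convexity of the $(x,y)$- and $(x,z)$-projections separately does not force convex position in $\mathbb{R}^3$; the correct requirement is sign-definiteness of the relevant $4\times 4$ orientation determinants, equivalently a torsion-type condition $f''g'''-f'''g''\ne 0$ for points on a curve $x\mapsto(x,f(x),g(x))$, and the lemma must be phrased so that the two one-dimensional conditions together imply it. The second, and I expect the main obstacle, is the counting: a naive nesting of the two searches multiplies the thresholds in the wrong direction and yields only an iterated-logarithmic---hence useless---gain. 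Making the two levels interact as a true product rather than a composition, so that the point cost is $2^{O(s+t)}$ and not a tower, is where a genuinely new combinatorial idea is needed, and is the step I would expect to absorb most of the effort before optimizing $s$ and $t$ to conclude.
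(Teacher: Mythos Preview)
Your proposal is a plan rather than a proof: both places you flag as ``the real difficulty'' are in fact unresolved, and the second one is fatal as stated. The paraboloid picture is seductive but misleading. On the actual paraboloid the grid $(a_i,b_j,a_i^2+b_j^2)$ is convex for free because the $x$- and $y$-curvatures decouple; for an arbitrary point set in $\R^3$ there is no such decoupling, and a ``cup of caps''---caps in the $(y,z)$-projection within slices, stacked as a cup across slices---is \emph{not} in convex position in general. The condition you need is sign-definiteness of all $4\times4$ orientation determinants, and this is genuinely a $3$-parameter condition, not a conjunction of two $1$-parameter ones; your torsion remark $f''g'''-f'''g''\neq 0$ governs points on a single space curve, not a two-parameter family, so it does not supply the certification lemma you want. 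Without that lemma the multiplicative gain $st$ never materialises, and you are back to a single planar Erd\H{o}s--Szekeres pass giving $2^{O(n)}$. Note also that your target $ES_3(n)\le 2^{O(\sqrt n)}$ would match the K\'arolyi--Valtr lower bound exactly; this is the conjectured truth and well beyond what is currently known.

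The paper's route is quite different and does not attempt a product structure. It projects to the plane and applies the positive-fraction Erd\H{o}s--Szekeres theorem to obtain $k_0=n^{1/4}$ clusters $X_1,\ldots,X_{k_0}$ whose projections sit in convex position; after a ham-sandwich refinement (``$2$-separation'') and a $4$-uniform Ramsey argument on an above/below relation, it passes to $k$ clusters with very rigid mutual geometry. The replacement for your certification lemma is a $3$-dimensional cups-vs-caps statement (Proposition~\ref{lem_es}): relative to a fixed polytope $P$ with $e(P)$ edges, any $P$-free set of size $>\binom{a+b-4}{a-2}^{e(P)}$ contains either a convex $b$-set or a \emph{$P$-cap} of size $a$, i.e.\ a set none of whose points lies in the convex hull of the rest together with $P$. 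This is what lets separately-found pieces be glued: a $P$-cap in cluster $j$ is automatically in convex position with anything contained in $P$, in particular with the other clusters. A further $3$-uniform Ramsey step on a red/blue colouring of triples selects $t$ clusters in which the relevant polytopes line up consistently, and the union of the $t/2$ caps (each of size $2n/t$) is then certifiably convex. The cost of finding each cap is $\binom{n+2n/t}{2n/t}^{O(1)}\approx t^{O(n/t)}$, so the final bound is $2^{O((n/t)\log t)}=2^{o(n)}$ with an $o(n)$ of the shape $n/\log_{(5)}n$. The savings come not from a multiplicative grid but from the fact that inside each cluster one only needs a cap of size $2n/t\ll n$, and the exponential cost in that parameter is subexponential in $n$; the glue is the $P$-cap notion, which is the idea your sketch is missing.
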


Together with the inequality chain from \eqref{ES}, Theorem \ref{main} implies that $ES_{d}(n) = 2^{o(n)}$ holds for all $d \geq 3$. Among other things, this disproves the prediction of Morris and Soltan from \cite{MS00}, who conjectured that $ES_{d}(n) = \Omega\left(2^{2n/d}\right)$, and in fact also $ES_{d}(n) = 4ES_{d}(n-d)-3$, should hold for all $d \geq 2$ and $n > \lfloor (3d+1)/2 \rfloor$. 

Our second result is a quantitative version of the so-called positive fraction Erd\H{o}s-Szekeres theorem in $\mathbb{R}^{3}$. We say that a collection of sets $X_1, \ldots, X_n \subset \R^d$ is in convex position if for every $i = 1, \ldots, n$ the convex hulls $\conv(X_i)$ and $\conv(\bigcup_{j \neq i} X_j)$ are disjoint. Note that this is a stronger condition than just to require that for any $x_1 \in X_1, \ldots, x_n \in X_n$ the set $\{x_1, \ldots, x_n\}$ is in convex position.

\begin{theorem} \label{fr}
There exist a sufficiently large positive integer $n_0$ such that for all $n \ge n_0$ the following holds: any set in general position $\mathcal{X} \subset \R^3$ with $|\mathcal{X}| \ge ES_{3}(8n)$ must contain a collection of subsets $X_1, \ldots, X_n \subset \mathcal{X}$ in convex position such that
$$
|X_i| \ge \frac{|\mathcal{X}|}{ES_3(8n)^8}
$$ for every $i = 1, \ldots, n$.
\end{theorem}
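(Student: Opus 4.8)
The plan is to build a \emph{quantitative same-type / positive-fraction selection} on top of Theorem~\ref{main}, following the general strategy of B\'ar\'any and Valtr but with the loss controlled by $ES_3(\cdot)$ rather than by the (far lossier) classical same-type constant. Write $N = ES_3(8n)$ and $M = |\mathcal{X}|$; we may assume $M$ is large. First I would partition $\mathcal{X}$ arbitrarily into $N$ groups $G_1, \dots, G_N$, each of size at least $M/N$, keeping the union in general position. Choosing one representative $g_i \in G_i$, the $N$ points $g_1, \dots, g_N$ are in general position, so by the very definition of $ES_3(8n)$ they contain $8n$ points in convex position; I discard all but the corresponding $8n$ groups, relabelled $A_1, \dots, A_{8n}$, each still of size at least $M/N$ and with representatives $a_1, \dots, a_{8n}$ in convex position.

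The heart of the argument is to shrink the $A_i$ to subsets $B_i \subseteq A_i$ so that \emph{every} transversal $(b_1, \dots, b_{8n})$ with $b_i \in B_i$ has the same order type (necessarily a convex one, since the representatives are already in convex position), while losing only a factor polynomial in $N$. A useful reduction here is that in $\R^3$ convex position is a \emph{local} condition: by Carath\'eodory, a set in general position is in convex position if and only if no five of its points fail to be (equivalently, no point lies in the tetrahedron of four others, i.e. every five points have the $2$--$3$ Radon type). Thus it suffices to force, for every $5$-tuple of indices, all transversals of the corresponding five sets into convex position. The main obstacle is to enforce this for all $5$-tuples \emph{simultaneously} with only a $\mathrm{poly}(N)$ loss: the naive tuple-by-tuple same-type argument pays a constant factor per $5$-tuple and the classical same-type lemma only yields a loss of order $2^{-\Omega(n \log n)}$, which is \emph{weaker} than the target $N^{-8} = 2^{-o(n)}$. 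I therefore expect this step to carry essentially all the difficulty, and to be where the remaining factor $N^{7}$ of the budget $N^{8}$ is spent, via a global selection that exploits the convex order type already pinned down on the representatives rather than processing tuples one at a time.

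Finally, I would upgrade the weak conclusion (all transversals in convex position) to the strong one (disjoint convex hulls) and pass from $8n$ sets down to $n$. Ordering $B_1, \dots, B_{8n}$ along the common convex order type and grouping them into $n$ consecutive blocks of $2^3 = 8$, I set $X_i$ to be the union of the $i$-th block. The role of the factor $8 = 2^{d}$ with $d = 3$ is exactly to provide enough ``buffer'' sets between consecutive blocks so that, using that a full block of eight sets fixes all the relevant orientations, each $\conv(X_i)$ can be cut off from $\conv(\bigcup_{j \neq i} X_j)$ by a hyperplane; this is what gives the strong convex-position-as-sets condition demanded in the statement, which is precisely why one cannot get away with merely $n$ points in convex position. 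Since each $X_i$ is a union of blocks of the $B$'s, combining the $1/N$ from the initial partition with the $1/N^{7}$ from the selection yields $|X_i| \ge M/N^{8} = |\mathcal{X}|/ES_3(8n)^{8}$, as required. The genuine crux is the polynomial-loss purification of the second paragraph; the block separation in this last step should be routine once the same-type structure is in hand.
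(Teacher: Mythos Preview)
Your proposal correctly identifies its own fatal weakness: the second paragraph---the ``polynomial-loss same-type purification''---is the entire content of the theorem, and you offer no mechanism for it beyond the hope that ``exploiting the convex order type already pinned down on the representatives'' will cut the loss from $2^{-\Omega(n\log n)}$ to $N^{-7}$. There is no known same-type lemma of this strength, and having the representatives in convex position does not obviously help: you still need to fix the orientation of each of the $\Theta(n^4)$ four-tuples of groups, and the standard halving arguments pay a constant factor per tuple. So as written the proposal is a restatement of the problem with the hard part named but not done. (Your final block-of-$8$ separation step is also asserted rather than argued, but that is secondary.)

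The paper avoids the same-type route entirely. Instead of partitioning and choosing representatives, it \emph{counts} the convex $8n$-subsets of $\mathcal{X}$ (there are at least $(M/N)^{8n}$ of them by double counting), and for each such subset $X$ uses the Four Color Theorem on the $1$-skeleton of $\conv(X)$ to peel off an independent $2n$-subset $Y_X\subset X$; the point is that for $x\in Y_X$ the ``visible-face sets'' $S(P,x)\subset\F_2(P)$, with $P=\conv(X\setminus Y_X)$, are pairwise disjoint. Two pigeonhole steps---first on the residual $6n$-polytope $Z=X\setminus Y_X$, then on the $2n$-tuple of visible-face sets, whose total number is controlled because the faces visible from any point form a \emph{connected} subgraph of the face-adjacency graph and hence are few by a subtree count---fix a common polytope $P=\conv(Z)$ and a common collection $S_1,\dots,S_{2n}$ of disjoint face-sets. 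The regions $R(P,S_i)=\{x:S(P,x)=S_i\}$ are then automatically in strong convex position (each is separated from the union of the others by a facet of $P$), and the surviving count forces at least $n$ of them to meet $\mathcal X$ in $\ge M/N^8$ points. Thus the factor $8$ in $8n$ arises from the split $8n=6n+2n$ coming from the Four Color bound and from the exponent bookkeeping, not from any block-buffer mechanism; and the strong convex-position conclusion comes for free from the disjoint-visibility structure rather than from a separate hyperplane cut-off.
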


It follows from Theorem \ref{main} and Theorem \ref{fr} that for every $n \geq 3$, there is $\epsilon_{n} = (1/2)^{o(n)}$ such that every set $\mathcal{X} \subset \mathbb{R}^{3}$ in general position must contain $n$ subsets $X_{1},\ldots,X_{n} \subset \mathcal{X}$ with $|X_{i}| \geq \epsilon_{n} |\mathcal{X}|$, for all $i = 1,\ldots,n$, and such that for every choice of $x_{1} \in X_1,\ldots,x_{k} \in X_{k}$, the set $\left\{x_1,\ldots,x_{k}\right\}$ is in convex position. By a projection argument in the same style with the one behind \eqref{ES}, it is easy to see that this further implies the following quantitative version of the positive fraction Erd\H{o}s-Szekeres theorem in $\mathbb{R}^{d}$.

\begin{theorem} \label{fr_d}
For every $d \geq 3$ and $n \geq 3$, there exists $\epsilon_{n} = (1/2)^{o(n)}$ such that the following holds: any set $X \subset \mathbb{R}^{d}$ in general position and of size $|\mathcal{X}| \geq ES_{3}(8n)$ must contain a collection of $n$ subsets $X_{1},\ldots,X_{n}$ in convex position with $|X_{i}| \geq \epsilon_{n} |\mathcal{X}|$, for all $i = 1,\ldots,n$.
\end{theorem}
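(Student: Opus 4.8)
The plan is to deduce the $d$-dimensional statement from the three-dimensional one by a generic projection, exactly in the spirit of the chain \eqref{ES}. The key structural observation is that the notion of convex position used here — disjointness of $\conv{(X_i)}$ and $\conv{(\bigcup_{j\neq i} X_j)}$ — is preserved when one pulls it back along an affine projection, so a configuration of sets in convex position in $\R^3$ lifts to one in $\R^d$.

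Concretely, given $\mathcal{X} \subset \R^d$ in general position with $|\mathcal{X}| \ge ES_3(8n)$, I would first fix a generic linear projection $\pi \colon \R^d \to \R^3$. For almost every choice of the target $3$-subspace, $\pi$ is injective on the finite set $\mathcal{X}$ and its image $\mathcal{Y} := \pi(\mathcal{X})$ remains in general position in $\R^3$; the bad projections form a proper subvariety of the relevant Grassmannian, so this is routine. Thus $|\mathcal{Y}| = |\mathcal{X}| \ge ES_3(8n)$ and $\mathcal{Y}$ is a valid input for the three-dimensional results.

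Next I would apply the three-dimensional positive fraction theorem, namely Theorem \ref{fr} combined with the subexponential bound $ES_3(8n)^8 = 2^{o(n)}$ coming from Theorem \ref{main}, to the projected set $\mathcal{Y}$. This yields subsets $Y_1,\ldots,Y_n \subseteq \mathcal{Y}$ in convex position with $|Y_i| \ge \epsilon_n |\mathcal{Y}|$, where $\epsilon_n = ES_3(8n)^{-8} = (1/2)^{o(n)}$. I would then lift them by setting $X_i := \pi^{-1}(Y_i) \cap \mathcal{X}$; injectivity of $\pi$ on $\mathcal{X}$ gives $\pi(X_i) = Y_i$, hence $|X_i| = |Y_i| \ge \epsilon_n |\mathcal{X}|$, and the $X_i$ are pairwise disjoint because the $Y_i$ are.

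The one step that genuinely needs checking — and the part I would treat as the main obstacle, though it is short — is that $X_1,\ldots,X_n$ are in convex position in $\R^d$. Since $\pi$ is affine it commutes with convex hulls, so $\pi(\conv{(X_i)}) = \conv{(Y_i)}$ and $\pi(\conv{(\bigcup_{j\neq i} X_j)}) = \conv{(\bigcup_{j\neq i} Y_j)}$. If some point $p$ lay in $\conv{(X_i)} \cap \conv{(\bigcup_{j\neq i} X_j)}$, then the trivial inclusion $\pi(A\cap B) \subseteq \pi(A) \cap \pi(B)$ would force $\pi(p) \in \conv{(Y_i)} \cap \conv{(\bigcup_{j\neq i} Y_j)}$, contradicting the convex position of the $Y_i$ in $\R^3$. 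Hence the projected disjointness pulls back to disjointness upstairs, and the $X_i$ are in convex position. All the real content lives in Theorem \ref{main} and Theorem \ref{fr}; the projection merely transports it up to $\R^d$, and one could equally well iterate the one-dimension-at-a-time projection behind \eqref{ES} instead of projecting to $\R^3$ in a single step. (For the finitely many small values of $n$ outside the range where Theorem \ref{fr} applies, the asymptotic conclusion $\epsilon_n = (1/2)^{o(n)}$ imposes no constraint and can be met trivially.)
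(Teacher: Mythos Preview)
Your proposal is correct and follows exactly the route the paper indicates: the paper does not give a separate proof of Theorem~\ref{fr_d} but simply states, just before the theorem, that it follows from Theorem~\ref{main} and Theorem~\ref{fr} ``by a projection argument in the same style with the one behind~\eqref{ES}.'' You have supplied precisely that argument, with the added care of verifying that the strong notion of convex position (disjointness of $\conv(X_i)$ from $\conv(\bigcup_{j\neq i}X_j)$) pulls back along an affine projection, and of handling the finitely many small~$n$ outside the range of Theorem~\ref{fr}.
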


The first such result was established by B\'ar\'any and Valtr in \cite{BV98} in $\mathbb{R}^{2}$ for all sets $\mathcal{X} \subset \mathbb{R}^{2}$ satisfying $|\mathcal{X}| \geq ES_{2}(n)$, with an $\epsilon_n^{-1}$ doubly exponential in $n$. This was later refined by Pach and Solymosi in \cite{PS98}, and then by Por and Valtr in \cite{PV02}, who showed that the planar version of the above statement holds with $\epsilon_n = n \cdot 2^{-32n}$. This is in some sense sharp, because on the other hand it can be shown that there exists a constant $\kappa \approx 1/\sqrt{2}$ and a set $\mathcal{X} \subset \mathbb{R}^{2}$ for which there is no collection of subsets $X_{1},\ldots,X_{n}$ with $|X_{i}| \geq \kappa^{n} |\mathcal{X}|$ for all $i=1,\ldots,n$, and with the required property. See \cite[Section 6.2]{PV02} for more details. In contrast, Theorem \ref{fr_d} shows that for all $d \geq 3$ the positive fraction Erd\H{o}s-Szekeres theorem holds with an $\epsilon_{n}^{-1}$ which is subexponential in $n$.

\section{Preliminaries}

In this section, we collect several results and preliminary lemmas that we will need for the proof of Theorem \ref{main}. 

\smallskip

\parag{Two-dimensional prerequisites.} The first theorem is a well-known result from \cite{ES35}, commonly referred to as the Erd\H{o}s-Szekeres cups-vs-caps theorem. Let $P \subset \mathbb{R}^{2}$ be a set of points in general position, and let $|P| = a$. We say that $P$ is an $a$-cap ($a$-cup) if $P$ is in convex position and its convex hull is bounded from below (above) by a single edge. Equivalently, note that $P$ is a cup if and only if for every point $p \in P$, there is a line $\ell$ containing $p$ such that all $p' \in p$, $p' \neq p$ lie above $\ell$. Similarly $Q \subset \mathbb{R}^{2}$ is a cap if and only if for every $q \in Q$, there is a line $\gamma$ containing $q$ such that all $q' \in Q$, $q' \neq q$ lie below $\gamma$.

\begin{theorem} \label{ESlemma}
Let $a,b \geq 2$ be positive integers, and let $f(a,b)$ be the smallest $N$ such that any set of points $X \subset \mathbb{R}^{2}$ in general position and with $|X| \geq N$ must always contain an $a$-cap or a $b$-cup. Then, 
$$f(a,b) = {a+b - 4 \choose a-2} + 1.$$
\end{theorem}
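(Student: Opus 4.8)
The plan is to prove matching upper and lower bounds, both by induction on $a+b$ driven by Pascal's identity $\binom{a+b-4}{a-2} = \binom{a+b-5}{a-3} + \binom{a+b-5}{a-2}$. Since $\binom{(a-1)+b-4}{(a-1)-2} = \binom{a+b-5}{a-3}$ and $\binom{a+(b-1)-4}{a-2} = \binom{a+b-5}{a-2}$, the target quantity $g(a,b) := \binom{a+b-4}{a-2}+1$ satisfies $g(a,b) = g(a-1,b) + g(a,b-1) - 1$, with base cases $g(2,b) = g(a,2) = 2$ (any two points form both a $2$-cap and a $2$-cup). So it suffices to show that $f$ obeys the same recurrence and base cases for the upper bound, and to exhibit extremal configurations of size $g(a,b)-1$ for the lower bound.

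\textbf{Upper bound.} I would prove $f(a,b) \le f(a-1,b) + f(a,b-1) - 1$ for $a,b \ge 3$. Take $X$ in general position with $|X| = f(a-1,b)+f(a,b-1)-1$ and suppose toward a contradiction that $X$ has no $a$-cap and no $b$-cup. Let $A \subseteq X$ be the set of points that occur as the rightmost point of some $(b-1)$-cup. Then $X \setminus A$ contains no $(b-1)$-cup (the rightmost point of any such cup would lie in $A$) and no $a$-cap, so $|X \setminus A| \le f(a,b-1)-1$ and hence $|A| \ge f(a-1,b)$. Since $A$ has no $b$-cup, it must contain an $(a-1)$-cap $q_1, \dots, q_{a-1}$ listed left to right. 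Because $q_1 \in A$, fix a $(b-1)$-cup $p_1, \dots, p_{b-2}, q_1$ ending at $q_1$. Comparing the slope of the last cup edge $p_{b-2}q_1$ with that of the first cap edge $q_1 q_2$: if the latter is larger, then $p_1, \dots, p_{b-2}, q_1, q_2$ is a $b$-cup; if it is smaller, then $p_{b-2}, q_1, q_2, \dots, q_{a-1}$ is an $a$-cap. (Equality is excluded by general position.) Either outcome contradicts our assumption, proving the recurrence, and the upper bound follows by induction.

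\textbf{Lower bound.} I would construct, for all $a,b \ge 2$, a set $X_{a,b}$ in general position with $|X_{a,b}| = \binom{a+b-4}{a-2}$ and no $a$-cap and no $b$-cup, again recursively. Set $X_{a,b} = L \cup R$, where $L$ is a copy of $X_{a-1,b}$ and $R$ a copy of $X_{a,b-1}$, placed so that every point of $L$ has smaller $x$-coordinate than every point of $R$ and, after translating $R$ sufficiently far downward, every segment joining a point of $L$ to a point of $R$ has slope strictly smaller than that of any segment internal to $L$ or to $R$. The size is correct by Pascal, and the base cases $X_{2,b}, X_{a,2}$ are single points. The slope condition forces any cup to use at most one point of $L$ (the crossing edge, being the most negative, can only be the first edge of a chain of increasing slopes) and any cap to use at most one point of $R$; hence a hypothetical $b$-cup would yield a $(b-1)$-cup inside $R = X_{a,b-1}$, and a hypothetical $a$-cap would yield an $(a-1)$-cap inside $L = X_{a-1,b}$, both impossible by induction.

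The main obstacle is making the geometric construction precise: one must verify that the "translate $R$ far down" step really can be carried out so that the stated slope inequality holds simultaneously for all cross pairs against all internal pairs while preserving general position, and then convert the qualitative statement that the crossing edge is the steepest into the exact claim that every cup and cap splits across $L$ and $R$ as asserted. By contrast, the upper bound's case analysis is routine once the right set $A$ is isolated; the bookkeeping in the construction is where the care is needed.
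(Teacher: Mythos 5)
Your proof is correct, but note that the paper itself does not prove Theorem~\ref{ESlemma} at all: it is quoted as a classical result of Erd\H{o}s and Szekeres \cite{ES35}, so there is no in-paper argument to compare against. What you have written is essentially the standard Erd\H{o}s--Szekeres cups-vs-caps proof: the upper bound via the recurrence $f(a,b)\le f(a-1,b)+f(a,b-1)-1$, obtained by isolating the set $A$ of rightmost points of $(b-1)$-cups and gluing a cup and a cap at a shared vertex by comparing the two incident slopes, and the lower bound via the two-block recursive construction whose size obeys Pascal's identity. The one cosmetic difference from the usual presentation is that you translate the right block $R$ far \emph{downward}, making all cross slopes smaller than all internal slopes, whereas the classical construction lifts one block far upward so the cross slopes dominate; the two are mirror images (negate the vertical coordinate, which swaps cups and caps), and your verification that a cup then meets $L$ in at most one point and a cap meets $R$ in at most one point is exactly right. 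The remaining bookkeeping you flag --- choosing the translation large enough and generic enough to keep general position --- is indeed routine: internal slopes are translation-invariant, cross slopes tend to $-\infty$ uniformly, and only finitely many translation values create collinear triples.
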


The next theorem is the planar positive fraction Erd\H{o}s-Szekeres theorem due to P\'or and Valtr \cite{PV02}, discussed above. We record its statement below together with some terminology.

\begin{theorem}\label{lem_pv}
Let $k \geq 3$ and let $X \subset \mathbb{R}^{2}$ be a finite point set in general position such that $|X| \geq 2^{40k}$. Then there is a $k$-element subset $P \subset X$ such that either a $k+1$-cap or a $k+1$-cup, and the regions $T_{1}\ldots,T_{k}$ from the support of $X$ satisfy $|T_{i} \cap X| \geq |X| / 2^{40k}$. In particular, every $k$-tuple obtained by selecting one point from each $T_i \cap X$, $i=1,\ldots,k$ is in a convex  position. 
\end{theorem}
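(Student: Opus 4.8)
The plan is to reduce the partitioned statement to the ordinary cups-vs-caps theorem (Theorem~\ref{ESlemma}). The key point is that an arbitrary point set gives no control over transversals, so I would first extract a large family of $x$-separated regions $T_1, \ldots, T_m$ (ordered by $x$-coordinate) that is \emph{orientation-coherent}: for every triple $T_a, T_b, T_c$ with $a<b<c$, all transversals $(u,v,w)$ with $u \in T_a$, $v \in T_b$, $w \in T_c$ induce the same orientation, i.e.\ $v$ lies above the line $uw$ for all such choices, or below it for all such choices. Equivalently, for each such triple the slopes of the $T_a$-to-$T_b$ segments form an interval disjoint from and ordered relative to the interval of slopes of the $T_b$-to-$T_c$ segments. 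This is exactly a planar instance of the same-type property: once it holds, the order type of any transversal of the whole family agrees with that of a fixed system of representatives $r_i \in T_i$.

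Given such a family, I would apply Theorem~\ref{ESlemma} to the $m$ representatives $r_1, \ldots, r_m$: as soon as $m \ge \binom{2k-4}{k-2}+1$, they contain a $k$-cup or a $k$-cap, say on indices $i_1 < \cdots < i_k$. By orientation-coherence, every consecutive triple of the regions $T_{i_1}, \ldots, T_{i_k}$ forces the same orientation as the representatives, so \emph{every} transversal drawn from these $k$ regions is again a $k$-cup (respectively $k$-cap). Relabelling them $T_1, \ldots, T_k$ and taking $P = \{r_{i_1}, \ldots, r_{i_k}\}$ then yields the required $k$-element subset together with its positive-fraction support.

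The construction of an orientation-coherent family with density $2^{-O(k)}$ is the heart of the matter and the main obstacle, since this is precisely where the quantitative strength of the theorem lives. Plain vertical slabbing into $2^t$ slabs gives $x$-separation with density $2^{-t}$ but exerts no control on the cross-slab slopes, so it fails to be coherent. The real work is to prune the slabs so that the relevant slope intervals become pairwise ordered and disjoint while losing only a single-exponential factor overall; invoking a generic same-type lemma would achieve coherence but with a far worse (super-exponential) constant. I would therefore build the regions by a recursive bisection that, at each cut, additionally pigeonholes the points of each side according to the slopes of the segments crossing that cut, and keeps only a sub-slab whose slopes fall in a controlled window. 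The delicate part is that slopes created by a deep cut inside one half need not be comparable to slopes bridging a shallower cut, so these windows must be nested consistently across all $O(k)$ levels; balancing the number of levels against the per-level density loss is what produces a bound of the form $2^{-O(k)}$, and squeezing the hidden constant down to the stated $2^{40k}$ is the quantitative crux carried out in \cite{PV02}.
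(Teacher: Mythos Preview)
The paper does not prove Theorem~\ref{lem_pv}: it is quoted as a result of P\'or and Valtr, and the paragraph immediately following the statement explicitly says that this formulation is a quick consequence of \cite[Theorem~4]{PV02}, referring to \cite{Suk17} for the derivation. So there is no in-paper argument to compare your proposal against; the paper treats the theorem as a black box from the literature.

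Your outline---build a large orientation-coherent (same-type) family of $x$-separated regions by recursive bisection with slope control, then apply Theorem~\ref{ESlemma} to a system of representatives---is broadly in the spirit of the actual P\'or--Valtr argument, and since you yourself defer the quantitative construction (and the constant $40$) to \cite{PV02}, your proposal is effectively a more detailed citation rather than an independent proof. That matches what the paper does. Two small alignment issues worth noting: to produce $k$ support regions you need a $(k{+}1)$-cup or $(k{+}1)$-cap among the representatives, so the threshold on $m$ should be $\binom{2k-2}{k-1}+1$ rather than $\binom{2k-4}{k-2}+1$; and the regions $T_i$ in the statement are the triangular \emph{support} regions sitting between consecutive vertices of the cup/cap, not the vertex-containing slab pieces you construct, so a final step is needed to pass from one to the other (orientation coherence makes this routine, but it should be said).
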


Given a $k+1$-cap or $k+1$-cup $P = \left\{x_1,\ldots,x_{k+1}\right\}$, where the points are sorted from left to right according to some coordinate system, the {\it{support}} of $P$ is the collection of regions $\left\{ T_1,\ldots,T_k\right\}$, where $T_{i}$ is the region outside of $\operatorname{conv}(P)$ is bounded by the segments $x_{i}x_{i+1}$ and by lines $x_{i-1}x_{i}$ and $x_{i+1}x_{i+2}$ (where the indices are taken modulo $k+1$ at the endpoints). Given $X \subset \mathbb{R}^{2}$ and the structure induced by Theorem \ref{lem_pv}, we shall also sometimes call $P$ the {\it{supporting polygon}} of the configuration and the edges $\left\{x_1x_2,\ldots,x_{k}x_{k+1}\right\}$, which are incident to its support, as the {\it{supporting edges}} of $P$. It is perhaps important to also emphasize that the version of Theorem \ref{lem_pv} cited above is not quite the original theorem of P\'or and Valtr from \cite{PV02}, but rather a quick consequence. We refer to \cite{Suk17} for more details about how Theorem \ref{lem_pv} follows from \cite[Theorem 4]{PV02}. 

Both Theorem \ref{ESlemma} and Theorem \ref{lem_pv} played a crucial in Suk's proof from \cite{Suk17}, and, despite their two-dimensional nature, will also play an important role in the proof of Theorem \ref{main}. 

\smallskip

\parag{Cups-vs-caps in $\mathbb{R}^{3}$.} Our next preliminary result is a simple three-dimensional generalization of Theorem \ref{ESlemma}, and which may be of independent interest. The statement requires a little bit of setup.

Given a convex set $C \subset \R^3$, we say that a set $X \subset \R^3$ is {\it{$C$-free}} if for any distinct $x, y \in X$ the line $l = x y$ does not intersect $C$. A set $Y \subset \R^3$ is called a {\it{$C$-cap}} if any point $y \in Y$ does not belong to the set $\conv(C \cup (Y \setminus \{y\}))$.

\begin{prop}\label{lem_es}
Let $P$ be a polytope and let $X \subset \R^3$ be a finite $P$-free set in general position. Let $e(P)$ denote the number of edges of $P$. If for some $a, b \ge 1$ we have $|X| > {a+b - 4 \choose a -2}^{e(P)}$, then either $X$ contains a $P$-cap of size $a$ or a convex set of size $b$.
\end{prop}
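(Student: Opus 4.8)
The plan is to prove the contrapositive counting statement: if $X$ is $P$-free and in general position, contains no $P$-cap of size $a$, and contains no convex subset of size $b$, then $|X| \le \binom{a+b-4}{a-2}^{e(P)}$. The shape of the bound --- the planar Erd\H{o}s--Szekeres quantity $f(a,b)-1 = \binom{a+b-4}{a-2}$ raised to the power $e(P)$ --- dictates the strategy: reduce the spatial problem to one planar cups-versus-caps problem per edge of $P$, and then combine the $e(P)$ planar structures through a product (injective-labeling) argument.

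First I would set up the per-edge reduction. For each edge $E$ of $P$, let $\ell_E$ be its supporting line. Because $X$ is $P$-free, no line through two points of $X$ crosses $E$, and this should let me record each point $p \in X$ by its position relative to $\ell_E$ --- an axial coordinate along $\ell_E$ together with an angular coordinate around it --- producing a planar point set $X_E$ in general position. The segment case $e(P)=1$ is the guiding example and the sanity check: there $P$ is a single edge, the angular unrolling turns $X$ into a planar configuration in which a $P$-cap is exactly a cap and a convex subset is exactly a cup, so the statement is literally Theorem \ref{ESlemma}. The content of the reduction is the dictionary in this planar picture: an $a$-cap of $X_E$ captures the cap-behaviour of $X$ ``across the edge $E$'', while a $b$-cup of $X_E$ already yields a convex subset of size $b$ in $X$.

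Next I would combine the edges. Assuming no convex $b$-subset, each $X_E$ has no $b$-cup, so by the sharp form of Theorem \ref{ESlemma} every point $p$ receives a cups-versus-caps label $\lambda_E(p)$ taking one of $\binom{a+b-4}{a-2}$ possible values. Assembling these gives a labeling $L(p) = (\lambda_E(p))_{E \in \mathrm{edges}(P)}$ into a set of size $\binom{a+b-4}{a-2}^{e(P)}$, and the whole argument then rests on showing that $L$ is injective: if two distinct points $p, q$ carried the same label on every edge, I would use this simultaneous agreement to append a point to a cap and thereby either enlarge some $P$-cap to size $a$ or expose a $b$-cup (hence a convex subset) on some edge --- either way a contradiction. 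Injectivity yields $|X| \le \binom{a+b-4}{a-2}^{e(P)}$, as desired.

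The hard part will be making the geometric dictionary of the previous two steps precise, and in particular justifying that the edges of $P$ --- and not, say, its facets or vertices --- are the correct index set producing the exponent $e(P)$. The governing fact is that when a point $y$ is adjoined to $P$, the new facets of $\conv{(P \cup \{y\})}$ are precisely the triangles spanned by $y$ together with the horizon edges of $P$ visible from $y$; thus whether a point is ``absorbed'' into $\conv{(P \cup (Y\setminus\{y\}))}$ is governed edge-by-edge, which is exactly what should let the per-edge planar analyses be reassembled into a genuine spatial $P$-cap. Turning this visibility picture into the clean claim that agreement of all $e(P)$ planar labels forces the appended point to keep every point of the configuration a vertex of the relevant convex hull is where I expect the real work to lie.
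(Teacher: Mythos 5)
Your high-level plan --- one planar cups-versus-caps problem per edge of $P$, combined through a product argument that accounts for the exponent $e(P)$ --- is indeed the shape of the paper's proof, but both of your concrete mechanisms have genuine gaps. The first is the per-edge reduction. Recording a point by its axial coordinate along $\ell_E$ and its angle around $\ell_E$ is a nonlinear (cylindrical unrolling) map: it sends lines to curves and does not preserve convex-hull relations, so your dictionary (a $b$-cup of $X_E$ yields a convex $b$-subset of $X$; an $a$-cap of $X_E$ records $P$-cap behaviour) is not just unproven but false. The unrolling discards the radial coordinate, on which convex position in space crucially depends, while the cup/cap structure of $X_E$ is decided by angular data that can be perturbed independently of the spatial convexity; already for your sanity-check case of a segment one can place points in convex position near a plane through the axis whose unrolled images form a cap or a cup at will. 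The paper's reduction is instead the \emph{linear} projection $\pi_e$ along the direction of the edge $e$. Linearity gives exactly the one implication the argument needs: $x \in \conv(S)$ forces $\pi_e(x) \in \conv(\pi_e(S))$, hence a planar convex $b$-set lifts to a spatial convex $b$-set, and a planar $\pi_e(P)$-cap of size $a$ lifts to a $P$-cap of size $a$.

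The second gap is that your combination step has no engine. Theorem \ref{ESlemma} is an existence statement: it does not assign to each point a label with $\binom{a+b-4}{a-2}$ values, you give no construction of $\lambda_E$, and no argument for why agreement of all $e(P)$ labels would force a contradiction. The paper makes the product idea precise differently. For each edge $e$ it defines a partial order by $y \prec_e x$ iff $\pi_e(y) \in \conv(\pi_e(P) \cup \{\pi_e(x)\})$, and proves the key observation missing from your proposal: $P$-freeness implies that every pair $x,y \in X$ is $\prec_e$-incomparable for \emph{some} edge $e$ (project along the line $xy$; the polytope projects to a polygon, some edge of that polygon separates the projected point from it, and the corresponding edge of $P$ works). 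This observation, not the visibility/horizon picture you describe, is the real reason edges are the right index set. Then Dilworth's theorem: if no $\prec_e$ had an antichain of size $|X|^{1/e(P)}$, each $\prec_e$ would admit a decomposition into fewer than $|X|^{1/e(P)}$ chains, and superimposing the $e(P)$ decompositions would leave fewer than $|X|$ cells, forcing two points to be comparable for every edge --- contradiction. This yields, for a \emph{single} edge $e$, an antichain $X'$ with $|X'| > \binom{a+b-4}{a-2}$, and only then is Theorem \ref{ESlemma} applied, once, to $\pi_e(X')$; the antichain condition is what upgrades a planar cap to a $\pi_e(P)$-cap, and lifting finishes the proof. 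In short, the product/pigeonhole has to act on objects with transitive meaning (chains of partial orders), not on cups-caps labels; supplying that idea is precisely the work your write-up defers, and carrying it out essentially reproduces the paper's argument.
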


\begin{proof}
First observe that for any line $l$ such that $l \cap P = \emptyset$ there exists an edge $e$ of $P$ such that the projections of $l$ and $P$ along $e$ are disjoint. Indeed, let us consider the projection $\pi: \R^3 \rightarrow \R^2$ along $l$. Then $\pi(l)$ is a point and it is disjoint from the polygon $\pi(P)$. So there exists an edge $e'$ of $\pi(P)$ such that the line $(e')$ spanned by $e'$ separates $\pi(l)$ from $\pi(P)$. Let $e$ be an  edge of $P$ in the preimage $\pi^{-1}(e')$; it is then easy to see that $e$ satisfies the desired condition.

Let $e$ be an edge of $P$, denote by $\pi_e$ the projection along $e$.
Define a partial order $\prec_e$ on $X$ as follows: for $x, y \in X$ we have $y \prec_e x$ if and only if $\pi_e(y) \in \conv\left(\pi_e(P) \cup \left\{\pi_e(x)\right\}\right)$. Clearly, $\prec_e$ is a partial order on $X$. The observation above implies that any points $x, y\in X$ are incomparable with respect to $\prec_e$ for at least one edge $e$ of $P$. Dilworth's theorem \cite{RD50} then implies that there exists a set $X' \subset X$ of size at least $|X|^{1/e(P)}$ which is an antichain with respect to the partial order $\prec_e$, for some edge $e$ of $P$. Indeed, if there is no such large antichain with respect to any of the partial orders $\prec_e$, then for every edge $e$ of $P$ there must exist a partition of $X$ into $<|X|^{1/e(P)}$ chains with respect to $\prec_e$. Superimposing these $e(P)$ partitions of $X$ gives a decomposition of $X$ into $<|X|$ sets, where each set is a chain with respect to all partial orders $\prec_e$, $e \in e(P)$. But such a decomposition is impossible: at least two distinct elements $x,y$ of $X$ must fall into the same set, while on the other hand $x$ and $y$ must incomparable with respect to $\prec_e$ for at least one edge $e$ of $P$.

By Theorem \ref{ESlemma}, applied to the projection $\pi_e(X')$ with parameters $a, b$ and an appropriately chosen coordinate system, we conclude that either $\pi_e(X')$ contains a $\pi_e(P)$-cap of size $a$ or a convex set on the plane of size $b$. Lifting either of these sets to $\R^3$ gives us a $P$-cap of size $a$ or a convex set of size $b$ in $X' \subset X$, respectively.
\end{proof}

\smallskip

\parag{Above and below in space.} Let $\pi: \R^3 \rightarrow \R^2$ be the projection onto the first 2 coordinates. For two disjoint line segments $\overline{ab}, \overline{cd} \subset \R^3$ whose projections $\pi(\overline{ab})$ and $\pi(\overline{cd})$ intersect at a point $x \in \R^2$, we say that $\overline{ab}$ {\it{lies above (below)}} $\overline{cd}$ if the third coordinate of the point $\overline{ab}\cap \pi^{-1}(x)$ is larger (smaller) than the third coordinate of the point $\overline{cd}\cap \pi^{-1}(x)$. 

\begin{prop}\label{lem_ab}
For any $k \ge 4$ there exists a number $AB(k)$ such that the following holds for any $N \geq AB(k)$. Let $x_1, \ldots, x_N \in \R^3$ be points in general position such that the projections $\pi(x_i)$, $i=1, \ldots, N$, are consecutive vertices of a convex polygon in $\R^2$. 
Then there is a $k$-element set $S \subset [N]$ such that either for any indices $i < i' < j < j' \in S$ the segment $\overline{x_i x_j}$ is above $\overline{x_{i'} x_{j'}}$ or the same condition holds with `above' replaced by `below'.
\end{prop}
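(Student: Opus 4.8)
The plan is to reduce the statement to the Ramsey theorem for $4$-uniform hypergraphs, after first pinning down why the ``above/below'' relation is canonically defined for exactly the quadruples appearing in the conclusion. Fix four indices $i < i' < j < j'$ in $[N]$. Since the projections $\pi(x_1), \ldots, \pi(x_N)$ are consecutive vertices of a convex polygon, the four points $\pi(x_i), \pi(x_{i'}), \pi(x_j), \pi(x_{j'})$ occur in this cyclic order along its boundary, so the chord $\pi(x_i)\pi(x_j)$ separates $\pi(x_{i'})$ from $\pi(x_{j'})$ and vice versa. Hence the projected segments $\pi(\overline{x_i x_j})$ and $\pi(\overline{x_{i'} x_{j'}})$ cross at an interior point $q \in \R^2$. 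Because $X$ is in general position, the four points $x_i, x_{i'}, x_j, x_{j'}$ are not coplanar, so the segments $\overline{x_i x_j}$ and $\overline{x_{i'} x_{j'}}$ do not meet in $\R^3$ and have distinct third coordinates over $q$. Thus exactly one of ``$\overline{x_i x_j}$ is above $\overline{x_{i'} x_{j'}}$'' or ``$\overline{x_i x_j}$ is below $\overline{x_{i'} x_{j'}}$'' holds.

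This lets me define a $2$-coloring $\chi$ of the $4$-element subsets of $[N]$: given $\{a,b,c,d\}$ with $a<b<c<d$, I set $\chi(\{a,b,c,d\})$ to be red if $\overline{x_a x_c}$ lies above $\overline{x_b x_d}$, and blue otherwise. Taking $AB(k) := R_4(k,k)$, any $N \ge AB(k)$ guarantees a $k$-element set $S \subseteq [N]$ all of whose $4$-subsets receive the same color. Unwinding the definition of $\chi$ (with $a,b,c,d$ playing the roles of $i,i',j,j'$), a red-monochromatic $S$ says exactly that for all $i < i' < j < j'$ in $S$ the segment $\overline{x_i x_j}$ is above $\overline{x_{i'} x_{j'}}$, while a blue-monochromatic $S$ gives the same statement with ``above'' replaced by ``below''. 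Either way this is the desired conclusion.

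I expect the only content-bearing step to be the geometric verification that the ``crossing diagonal'' pairing $(i,j)$, $(i',j')$ is precisely the one for which the projected chords always cross, together with the use of general position to make the height comparison well-defined; the rest is a black-box application of Ramsey's theorem. The one point worth flagging is quantitative: this argument yields only a tower-type bound $AB(k) = R_4(k,k)$, which is far from the subexponential regime targeted elsewhere in the paper. Since the statement asks only for the existence of a finite $AB(k)$, this is harmless here; but if a sharper dependence on $k$ were required later, one would want to replace the crude Ramsey step by a more structured argument exploiting the transitivity-like behavior of the height comparisons as one moves along the convex polygon.
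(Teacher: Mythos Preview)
Your proof is correct and follows essentially the same approach as the paper: define a $2$-coloring of $4$-subsets of $[N]$ by whether $\overline{x_a x_c}$ lies above or below $\overline{x_b x_d}$ (for $a<b<c<d$), and invoke $R_4(k,k)$. You are actually a bit more careful than the paper in spelling out why the coloring is well-defined (the convex-position argument forcing the projected diagonals to cross, plus general position ruling out coplanarity), and your closing remark on the crude quantitative dependence mirrors the paper's own comment following Corollary~\ref{cor_ab}.
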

	
\begin{proof}
Consider the following $2$-coloring of the $4$-element subsets of $\left\{x_1,\ldots,x_N\right\}$: for every $4$-tuple $1\leq i<i'<j<j' \leq N$, say $\left\{x_i,x_i',x_j,x_j'\right\}$ is red if the segment $\overline{x_i x_j}$ is above $\overline{x_{i'} x_{j'}}$, and say $\left\{x_i,x_i',x_j,x_j'\right\}$ is blue otherwise. Since the points $x_1,\ldots,x_N$ are in general position, note that the latter happens precisely if and only if the segment $\overline{x_i x_j}$ is below $\overline{x_{i'} x_{j'}}$. 

The conclusion thus follows from Ramsey's theorem (see \cite{Ramsey} or \cite[Theorem 4.18]{Jukna}): any number $AB(k) \geq R_{4}(k,k)$ satisfies the statement. 
\end{proof}	

\begin{prop}\label{pr_ab}
Let $X_1, X_2, X_3, X_4 \subset \R^3$ be pairwise disjoints sets such that $X_1 \cup X_2 \cup X_3 \cup X_4$ is in general position and for any $x_i \in X_i$, $i = 1, \ldots, 4$ the segment $\overline{x_1 x_3}$ is above $\overline{x_2 x_4}$ (in particular, their projections on $\R^2$ intersect). Then convex hulls $\conv(X_1 \cup X_3)$ and $\conv(X_2 \cup X_4)$ are disjoint.
\end{prop}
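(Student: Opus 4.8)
The plan is to argue by contradiction: I assume the two hulls share a point $p$ and extract from it a violation of the ``above'' hypothesis. The bridge between the combinatorial hypothesis and the geometry will be the signed--volume functional
$$D(x_1,x_2,x_3,x_4)=\det\begin{pmatrix}1&1&1&1\\ x_1&x_2&x_3&x_4\end{pmatrix},$$
whose columns are the homogeneous coordinates of the four points, so that $D=0$ exactly when $x_1,x_2,x_3,x_4$ are coplanar. Two features of $D$ will drive everything. First, $D$ is affine in each of its four arguments separately, since each point occupies a single column and enters linearly while the constant row makes the dependence affine. Second, general position of $X_1\cup X_2\cup X_3\cup X_4$ forces $D\neq 0$ on the finite product $X_1\times X_2\times X_3\times X_4$.

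For the contradiction I would first record that $\conv(X_1 \cup X_3)$ is exactly the union of the segments $\overline{uv}$ with $u\in\conv(X_1)$ and $v\in\conv(X_3)$, and symmetrically for $\conv(X_2 \cup X_4)$. Thus a common point $p$ produces representatives $u\in\conv(X_1),\,v\in\conv(X_3),\,w\in\conv(X_2),\,z\in\conv(X_4)$ with $p\in\overline{uv}\cap\overline{wz}$; writing $p=\lambda u+(1-\lambda)v=\mu w+(1-\mu)z$ yields a nontrivial affine dependence among $u,v,w,z$, whence these four points are coplanar and $D(u,w,v,z)=0$. The entire proposition is therefore reduced to the following \emph{Core Lemma}: $D(u,w,v,z)\neq 0$ for \emph{every} choice of $u\in\conv(X_1),\,w\in\conv(X_2),\,v\in\conv(X_3),\,z\in\conv(X_4)$.

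To prove the Core Lemma I would show that $D$ has a \emph{constant} sign on the finite product $X_1\times X_2\times X_3\times X_4$, and then propagate nonvanishing to all convex combinations: fixing three arguments at original points and letting the fourth range over a hull, affineness turns ``one sign at the vertices'' into ``the same sign on the hull,'' and iterating over the four coordinates yields $D\neq 0$ (of constant sign) on the whole product of hulls. This propagation step is entirely routine once the sign on $X_1\times X_2\times X_3\times X_4$ is known to be constant.

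Establishing that constant sign is where the ``above'' hypothesis enters, and it is the step I expect to be the main obstacle. The sign of $D$ factors as a planar orientation times the vertical comparison: comparing $D$ with the height gap $\delta$ at the crossing point of the two projected diagonals (via Cramer's rule, with the crossing parameter contributing a positive factor) gives $\operatorname{sign} D=-\operatorname{sign}(\delta)\cdot\operatorname{sign}\big(\mathrm{area}(\pi x_2,\pi x_3,\pi x_4)\big)$. Since every tuple satisfies ``above'' we have $\delta>0$ throughout, so constancy of $\operatorname{sign} D$ reduces to constancy of the planar orientation of $\pi x_2,\pi x_3,\pi x_4$. For this I would prove a short planar sublemma stating that the crossing of the $1$--$3$ diagonal with the $2$--$4$ diagonal persists for all representatives in the convex hulls: fixing the three other projected points, the set of positions of the fourth that makes the two projected diagonals cross is the intersection of a wedge with a half-plane, hence convex, so replacing each $\pi(X_i)$ by its hull one coordinate at a time shows the crossing never degenerates over the connected product of hulls. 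The orientation therefore cannot flip, $\operatorname{sign} D$ is constant, and the Core Lemma follows.
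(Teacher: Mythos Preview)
Your approach is sound and genuinely different from the paper's. The paper invokes Kirchberger's theorem to extract five points $A\subset X_1\cup X_3$, $B\subset X_2\cup X_4$ with $|A|+|B|=5$ and $\conv(A)\cap\conv(B)\neq\emptyset$; it then uses the observation that the projected families $\pi(X_1),\ldots,\pi(X_4)$ are in convex position to rule out either set lying inside a single $X_i$, and finishes by a direct height comparison in the remaining segment--versus--triangle configuration. You replace Kirchberger entirely with the multi\-affinity of the signed--volume form $D$: once $D$ has constant sign on the finite product $X_1\times X_2\times X_3\times X_4$, affineness in each slot propagates that sign to $\prod_i\conv(X_i)$, and a common point of $\conv(X_1\cup X_3)$ and $\conv(X_2\cup X_4)$ would force $D=0$ there. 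This is more computational but self-contained (no Kirchberger), and it actually yields a strictly stronger statement, your Core Lemma, than the proposition requires.

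One step deserves more care. Your convexity sublemma correctly extends ``the projected $1$--$3$ and $2$--$4$ diagonals intersect'' to the hull product, but ``the crossing persists, hence the orientation cannot flip'' tacitly assumes the intersection stays \emph{transversal}. Over the hulls it may degenerate to an endpoint (e.g.\ $\pi v\in\overline{\pi w\,\pi z}$), which gives $[\pi w,\pi v,\pi z]=0$ without contradicting anything you have shown; the connectedness argument then does not exclude a sign change. A clean repair works directly on the finite product and avoids the hulls altogether: whenever the diagonals of $y_1y_2y_3y_4$ cross, the four triangle orientations $[y_1,y_2,y_3]$, $[y_1,y_2,y_4]$, $[y_1,y_3,y_4]$, $[y_2,y_3,y_4]$ all share one sign (the orientation of the convex quadrilateral); your formula (and general position, which forces $D\neq 0$) guarantees that at each tuple at least the two orientations corresponding to the interior--parameter endpoints are nonzero. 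Moving from $(y_1,y_2,y_3,y_4)$ to $(y_1',y_2',y_3',y_4')$ one coordinate at a time, at each step the triangle not involving the changed index is literally unchanged, so the common sign is preserved. This establishes the constant sign of $D$ on $\prod_i X_i$ without any appeal to connectedness, after which your propagation step finishes the proof.
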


\begin{proof}
The proof is based on the following classical result known as Kirchberger's theorem \cite{Kir}. See also \cite{Ba} for an excellent exposition.

\begin{theorem}\label{kirch}
Let $A, B \subset \R^d$ be arbitrary non-empty sets such that $\conv(A) \cap \conv(B) \neq \emptyset$. Then there exists a subset $A' \subset A$ and a subset $B' \subset B$ such that $\conv(A') \cap \conv(B') \neq \emptyset$ and $|A'| + |B'| \le d+2$.
\end{theorem}

Now we prove Proposition \ref{pr_ab}. Suppose that convex hulls of sets $X_1 \cup X_3$ and $X_2 \cup X_4$ intersect. Then by Theorem \ref{kirch} we can find sets $A \subset X_1 \cup X_3$ and $B \subset X_2 \cup X_4$ such that $|A|+|B| = 5$ and $\conv(A) \cap \conv(B) \neq \emptyset$.

Let $Y_i = \pi(X_i)$, $i = 1, \ldots, 4$. By assumption, for any $y_i \in Y_i$ the segments $\overline{y_1 y_3}$ and $\overline{y_2 y_4}$ intersect. Then it is easy to see that the collection $Y_1, Y_2, Y_3, Y_4$ is in convex position.
This implies that neither of the sets $A, B$ is fully contained in any set $X_i$, $i=1,\ldots,4$. Without loss of generality, we may assume that $A = \{x_1, x_3\}$, $B = \{x_2, x_4, x_4'\}$ where $x_1 \in X_1$, $x_2 \in X_2$, $x_3 \in X_3$ and $x_4, x_4' \in X_4$. By assumption, both segments $\overline{x_2 x_4}$ and $\overline{x_2 x_4'}$ are below $\overline{x_1 x_3}$. This means that the segment $\overline{x_1 x_3}$ and the triangle $\conv(x_2, x_4, x_4')$ are disjoint. But this contradicts the assumption that $\conv(A) \cap \conv(B) \neq \emptyset$.
\end{proof}

Combining these two propositions we obtain:

\begin{cor}\label{cor_ab}
Let $X \subset \R^3$ be a set of points in general position such that the projection $\pi(X)$ is  in convex position. If $N \ge AB(k)$ then there are points $x_1, \ldots, x_k \in X$ such that $\pi(x_1), \ldots, \pi(x_k)$ are consecutive vertices of a convex polygon on the plane and for any $1 \le a \le b \le c \le k$ the convex hulls of the sets 
\begin{align*}
\{x_{1}, \ldots, x_{a-1}\} \cup \{x_{b}, \ldots, x_{c-1}\}\ \ \ \text{and}\ \ \
\{x_{a}, \ldots, x_{b-1}\} \cup \{x_{c}, \ldots, x_{k}\}
\end{align*}
are disjoint.
\end{cor}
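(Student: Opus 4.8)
The plan is to deduce Corollary \ref{cor_ab} by first invoking Proposition \ref{lem_ab} to pass to a large subset of points whose projections are vertices of a convex polygon and whose pairwise segments are consistently ordered, and then to apply Proposition \ref{pr_ab} to every relevant partition of this subset. Concretely, given $X$ with $\pi(X)$ in convex position and $|X| = N \ge AB(k)$, I would first select points so that $\pi(x_1), \ldots, \pi(x_N)$ are consecutive vertices of the convex polygon $\pi(X)$ (reindexing along the cyclic boundary order). Applying Proposition \ref{lem_ab} to these $N$ points yields a $k$-element set $S \subset [N]$ on which the ``above/below'' relation is monochromatic: after relabeling the elements of $S$ as $x_1, \ldots, x_k$ in increasing index order, we may assume that for all $i < i' < j < j'$ in $S$ the segment $\overline{x_i x_j}$ lies above $\overline{x_{i'} x_{j'}}$ (the ``below'' case being symmetric). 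Crucially, the projections $\pi(x_1), \ldots, \pi(x_k)$ remain consecutive vertices of a convex polygon, since passing to a subset of vertices of a convex polygon preserves convex position.

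Next I would fix $1 \le a \le b \le c \le k$ and show that $\conv(\{x_1, \ldots, x_{a-1}\} \cup \{x_b, \ldots, x_{c-1}\})$ and $\conv(\{x_a, \ldots, x_{b-1}\} \cup \{x_c, \ldots, x_k\})$ are disjoint. The idea is to reduce this to a single application of Proposition \ref{pr_ab} by partitioning the index set into four consecutive blocks
\[
B_1 = \{1, \ldots, a-1\},\quad B_2 = \{a, \ldots, b-1\},\quad B_3 = \{b, \ldots, c-1\},\quad B_4 = \{c, \ldots, k\},
\]
and setting $X_i = \{x_t : t \in B_i\}$ for $i = 1, 2, 3, 4$. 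For any choice of $x_i \in X_i$ the indices satisfy $i_1 < i_2 < i_3 < i_4$, so the monochromaticity guarantees that $\overline{x_{i_1} x_{i_3}}$ lies above $\overline{x_{i_2} x_{i_4}}$. This is exactly the hypothesis of Proposition \ref{pr_ab} for the four sets $X_1, X_2, X_3, X_4$, whence $\conv(X_1 \cup X_3)$ and $\conv(X_2 \cup X_4)$ are disjoint, which is precisely the desired conclusion for this $(a,b,c)$.

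The main subtlety I anticipate is handling the degenerate cases where one or more of the blocks $B_i$ is empty (for instance $a=1$ forces $B_1 = \emptyset$, and $a=b$ forces $B_2 = \emptyset$). Proposition \ref{pr_ab} is stated for four nonempty pairwise disjoint sets, so I would need to verify that the disjointness conclusion persists when some $X_i$ are empty. This is routine: if $X_1 \cup X_3$ or $X_2 \cup X_4$ is empty the statement is vacuous, and if only some individual blocks are empty one simply drops them and observes that the convex hull only shrinks, so disjointness of the larger hulls follows a fortiori — or, more carefully, one applies Proposition \ref{pr_ab} after discarding empty blocks and checking that the above/below hypothesis still holds for all surviving quadruples of points. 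I would also note that the general position of $X$ is inherited by the selected subset, so all hypotheses of the invoked propositions are met. Apart from these bookkeeping checks with empty blocks, the argument is a direct combination of the two propositions, so I expect the only real work to be the careful treatment of boundary indices.
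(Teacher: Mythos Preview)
Your proposal is correct and is precisely the combination the paper intends: apply Proposition~\ref{lem_ab} to extract a monochromatic $k$-tuple, then for each triple $(a,b,c)$ feed the four consecutive blocks into Proposition~\ref{pr_ab}. The paper itself gives no further details beyond ``combining these two propositions,'' and your handling of the degenerate (empty-block) cases and the symmetric ``below'' case fills in exactly the routine bookkeeping that is left implicit.
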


As a sidenote, we believe that finding the smallest value $AB(k)$ for which the conclusion of Proposition \ref{lem_ab} holds for all $N \geq AB(k)$ might be an interesting problem for its own sake. For example, without too much effort, one can readily note that the $2$-coloring from the proof of Proposition \ref{lem_ab} is semialgebraic and of low complexity, which means that the improved quantitative bounds for semi-algebraic Ramsey numbers (e.g. \cite{Suk14}) immediately yield better information about $AB(k)$ than the proof of Proposition \ref{lem_ab} does. Nevertheless, such improvements only seem to have a rather immaterial effect on our $o(n)$ term in Theorem \ref{main}, see also the remark at the end of Section 3 for more details. 

\smallskip

\parag{$2$-Separability.} 
We call a collection of sets $X_1, \ldots, X_k \subset \R^3$ {\it $2$-separated} if for any set of indices $i, j, i', j' \in [k]$ such that $\{i, j\} \cap \{i', j'\} = \emptyset$ we have $$
\conv(X_i \cup X_j) \cap \conv(X_{i'} \cup X_{j'}) = \emptyset.
$$

\begin{prop}\label{lem_sep}
Let $X_1, \ldots, X_k \subset \R^3$ be finite pairwise disjoint sets of size at least $2^{k^3}$ such that $X_1 \cup \ldots \cup X_k$ is in general position. Then, there exist $Y_i \subset X_i$ such that $|Y_i| \ge 2^{-k^3} |X_i|$ for every $i=1,\ldots,k,$ and the collection $Y_1, \ldots, Y_k$ is $2$-separated.
\end{prop}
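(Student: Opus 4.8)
The plan is to realize the required $2$-separation as a conjunction of many local conditions, one for each way of splitting four of the indices into two pairs, and to enforce them one at a time while only ever passing to subsets. Observe first that $2$-separation is an \emph{monotone} property: if $\conv(Y_i \cup Y_j) \cap \conv(Y_{i'} \cup Y_{j'}) = \emptyset$ and we replace the $Y$'s by subsets, the convex hulls only shrink, so the disjointness survives. Moreover each constraint $\conv(Y_i \cup Y_j) \cap \conv(Y_{i'} \cup Y_{j'}) = \emptyset$ involves only the four sets indexed by $\{i,j,i',j'\}$. Hence it suffices to solve the following local problem: given four pairwise disjoint sets in $\R^3$ in general position, pass to subsets that retain a constant fraction of each set and on which all three pairings of the four indices have disjoint convex hulls. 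Processing the at most $3\binom{k}{4}$ four-subsets in an arbitrary order and applying the local step to the current sets each time, monotonicity guarantees that previously achieved separations are preserved. Since a fixed index belongs to only $\binom{k-1}{3} = O(k^3)$ of these four-subsets and each local step costs a constant factor in each set it touches, the total multiplicative loss in $|X_i|$ is $2^{-O(k^3)}$; tuning the constants (or enlarging the exponent polynomially) matches the stated bound $2^{-k^3}$, and the hypothesis $|X_i| \ge 2^{k^3}$ keeps every set large enough for the selection lemmas below to apply at every step.

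For the local step, fix four sets $A, B, C, D$. First I would run a planar same-type selection, applied after projecting along a generically chosen direction, so that the four projected blobs are in convex position as point sets, with some cyclic order; relabel it $A, B, C, D$. Convex position of the projected blobs means that the two \emph{non-crossing} pairings, namely $\{A,B\}$ versus $\{C,D\}$ and $\{A,D\}$ versus $\{B,C\}$, are separated by a line in the plane; pulling this line back to a vertical plane in $\R^3$ separates the corresponding spatial hulls, so these two pairings are done. Only the \emph{crossing} pairing $\{A,C\}$ versus $\{B,D\}$ remains, and for it the projected segments genuinely cross. Here I would apply the above/below machinery: by Proposition \ref{lem_ab} (equivalently Corollary \ref{cor_ab}) pass to a further constant-fraction subsystem on which the above/below relation between $\overline{x_A x_C}$ and $\overline{x_B x_D}$ is constant over all transversals, and then invoke Proposition \ref{pr_ab} to conclude that $\conv(A \cup C)$ and $\conv(B \cup D)$ are disjoint. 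This settles the third pairing and completes the local step.

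The main obstacle, and the one place where three-dimensionality is indispensable, is the first part of the local step: guaranteeing with only a constant-factor loss that all four blobs project to convex position, i.e. that no blob becomes \emph{trapped inside} the convex hull of the projections of the other three. In the plane the analogous separation is simply false --- four interleaved clusters lying along a common segment cannot be $2$-separated at all --- whereas in $\R^3$ two vertex-disjoint segments in general position never meet, which is exactly what lets the above/below order rescue the crossing pairing. I therefore expect the technical heart of the argument to be a clean selection lemma producing, from any four sets in $\R^3$, constant-fraction subsets whose projection along a well-chosen generic direction is in convex position; I would obtain this either by a direct same-type argument in the style of B\'ar\'any and Valtr or by bootstrapping the planar positive-fraction theorem (Theorem \ref{lem_pv}) together with Corollary \ref{cor_ab}. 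Once this is in place, the reduction to Kirchberger's theorem inside Proposition \ref{pr_ab} handles the fact that we are intersecting hulls of whole blobs rather than of single transversals.
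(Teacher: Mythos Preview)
Your iterative scheme---process one $4$-tuple at a time, using that $2$-separation is monotone under passing to subsets---is exactly what the paper does, and your bookkeeping of the multiplicative loss ($3\binom{k-1}{3}$ halvings per index, hence the $2^{-k^3}$) matches. The divergence is entirely in the local step. The paper's local step is a one-liner: by the discrete ham sandwich theorem (Lemma~\ref{BU} with $d=3$, $r=2$), for any four finite sets in $\R^3$ and any designated pairing $\{i_1,i_2\}\cup\{i_3,i_4\}$ there is a hyperplane $H$ with at least half of $X_{i_1},X_{i_2}$ in $H^+$ and at least half of $X_{i_3},X_{i_4}$ in $H^-$; take those halves, nudge $H$ off the points using general position, and that pairing is separated at a cost of one halving per set. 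Three applications---one per pairing---finish the $4$-tuple.

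Your local step instead tries to handle two pairings by a vertical plane after projecting and the third by the above/below machinery. There is a genuine gap in the first move, which you yourself flag: a planar same-type selection on the projections does \emph{not} force the four blobs into convex position---the same-type lemma is perfectly happy to output subsets for which every transversal has one projected point inside the triangle of the other three, and neither Theorem~\ref{lem_pv} nor a generic choice of projection direction rules this out (think of $D$ sitting near the centroid of $\conv(A\cup B\cup C)$ at roughly the same height). You are right that three-dimensionality is what rescues the situation, but you have not supplied the mechanism; the cleanest such mechanism is precisely the ham sandwich step above, which renders the projection detour unnecessary. A smaller point: Proposition~\ref{lem_ab} and Corollary~\ref{cor_ab} concern single points, not positive-fraction subsets, so they do not give you a constant-fraction subsystem with a fixed above/below sign; what you would actually need there is another same-type or halving argument (the above/below relation is a single orientation sign in $\R^3$), not those propositions.
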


The proof of Theorem \ref{lem_sep} rests upon the observation that for every $1 \leq i_1 < i_2 < i_3 < i_4 \leq k$, there exist subsets $Y_{i_j} \subset X_{i_j}$ with $|Y_{i_j}| \geq |X_{i_{j}}|/2$ for all $j =1,\ldots,4$, and such that the convex hulls of sets $Y_{i_1} \cup Y_{i_2}$ and $Y_{i_3} \cup Y_{i_4}$ are disjoint. This in turn follows from the following consequence of the so-called ham sandwich theorem from topology, which was originally conjectured by Steinhaus, proved by Banach in 1938, and subsequently generalized by Stone and Tukey in 1942. See for example \cite{Ham} and the references therein. 

For a hyperplane $H \subset \R^d$ we denote by $H^+$ and $H^-$ the two closed half-spaces with boundary $H$. Note that in order for the half-spaces $H^+$ and $H^-$ to be properly defined one also has to fix an orientation on $H$: otherwise, there will be no way to distinguish between $H^+$ and $H^-$. So whenever we talk about half-spaces corresponding to a given hyperplane $H$ we implicitly assume that $H$ is oriented.

\begin{lemma} \label{BU}
Let $d$ and $r$ be integers such that $1 \le r \le d$. Let $X_1, \ldots, X_{d+1} \subset \R^d$ be arbitrary finite sets. Then there exists a hyperplane $H$ such that the closed half-space $H^+$ intersects the sets $X_1, \ldots, X_r$ in at least half of the elements and the closed half-space $H^-$ intersects the sets $X_{r+1},\ldots, X_{d+1}$ in at least half of the elements.
\end{lemma}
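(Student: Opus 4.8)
The plan is to derive this from the classical ham-sandwich theorem for finite point sets, used as a black box: any $d$ finite point sets $A_1, \ldots, A_d \subset \R^d$ admit a hyperplane $H_0$ that simultaneously bisects all of them, in the sense that each of the two closed half-spaces bounded by $H_0$ contains at least $|A_i|/2$ points of $A_i$ for every $i$. This is exactly the Banach/Stone--Tukey statement specialized to counting measures, i.e.\ the ``consequence of the ham sandwich theorem'' alluded to above. The point to exploit is that we are given $d+1$ sets but only require a one-sided ``at least half'' condition, and that the orientation of the bisecting hyperplane is still free to be chosen.

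First I would apply the ham-sandwich theorem to the first $d$ sets $X_1, \ldots, X_d$, obtaining a hyperplane $H_0$ that bisects each of them. Because $H_0$ bisects $X_i$, each of its two closed half-spaces meets $X_i$ in at least half of its points; in particular such a set automatically satisfies the required condition no matter which of the two half-spaces we decide to call $H^+$. Thus the only set whose side still needs to be controlled is $X_{d+1}$.

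Next I would handle $X_{d+1}$ using the remaining degree of freedom, namely the choice of orientation of $H_0$. The two closed half-spaces of $H_0$ cover $X_{d+1}$, with points lying on $H_0$ counted in both, so $|X_{d+1} \cap H_0^+| + |X_{d+1} \cap H_0^-| \ge |X_{d+1}|$, and hence at least one of the two closed half-spaces contains at least $|X_{d+1}|/2$ points of $X_{d+1}$. I would then orient $H_0$ so that this is precisely the half-space $H^-$, and call the resulting oriented hyperplane $H$. The hypothesis $r \le d$ guarantees that $X_{d+1}$ lies in the group of sets required to have at least half their points in $H^-$, so this orientation is consistent with the desired conclusion. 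Verifying the conclusion is then immediate: $X_1, \ldots, X_r$ are bisected, so each has at least half its points in $H^+$; $X_{r+1}, \ldots, X_d$ are bisected, so each has at least half its points in $H^-$; and $X_{d+1}$ has at least half its points in $H^-$ by the choice of orientation.

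I do not expect a serious obstacle here, so the main task is to keep two bookkeeping points straight. The first is that ``bisected'' already yields ``at least half on each closed side'', which is why the value of $r$ is irrelevant for the sets $X_1, \ldots, X_d$ and why a single application of ham-sandwich to $d$ of the $d+1$ sets suffices. The second is that $r \le d$ is exactly what forces the one uncontrolled set, $X_{d+1}$, onto the minus side, which is the side we can steer it to by flipping the orientation of $H_0$; were $X_{d+1}$ instead required on the plus side we would simply bisect a different $d$-subset, but the stated hypotheses make leaving out $X_{d+1}$ the natural choice. (If a self-contained argument were preferred, one could instead parametrize oriented hyperplanes by $S^d$, smooth each $X_i$ into a measure with no mass on hyperplanes so that the signed counting functions become continuous and odd, apply the Borsuk--Ulam theorem to the first $d$ of them to force a simultaneous bisection, and then pass to the limit as the smoothing radius tends to $0$; the orientation step is identical.)
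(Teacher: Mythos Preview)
Your proposal is correct and follows essentially the same approach as the paper: apply the discrete ham-sandwich theorem to $X_1,\ldots,X_d$, then use the remaining freedom in orienting $H$ to place at least half of $X_{d+1}$ on the $H^-$ side. Your write-up is in fact slightly more explicit than the paper's about why bisection of $X_1,\ldots,X_d$ makes the value of $r$ immaterial for those sets.
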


\begin{proof}
By the discrete version of the ham sandwich theorem \cite[Theorem 3.1.2]{MatBU}, there exists a hyperplane $H$ such that for any $i = 1, \ldots, d$ we have $|H^+ \cap X_i|, |H^- \cap X_i| \ge |X_i|/2$. For the last set $X_{d+1}$ we have either $|H^+ \cap X_{d+1}| \ge |X_{d+1}|/2$ or $|H^- \cap X_{d+1}| \ge |X_{d+1}|/2$, so after choosing an appropriate orientation of $H$ we obtain the claim.
\end{proof}

\begin{proofof}{Proposition \ref{lem_sep}} By Lemma \ref{BU}, applied in $\mathbb{R}^{3}$ and with $r=2$, it follows that for every $1 \leq i_1 < i_2 < i_3 < i_4 \leq k$, there exist subsets $Y_{i_j} \subset X_{i_j}$ with $|Y_{i_j}| \geq |X_{i_{j}}|/2$ for all $j =1,\ldots,4$, and such that the convex hulls of sets  $Y_{i_1} \cup Y_{i_2}$, $Y_{i_3} \cup Y_{i_4}$ are disjoint. Indeed, this is because one can take $Y_{i_1}$ and $Y_{i_2}$ to be the subsets of $X_{i_1}$ and  $X_{i_2}$ that are in $H^{+}$ and $Y_{i_3} \subset X_{i_3}$ and $Y_{i_4} \subset X_{i_4}$ to be the subsets in $H^{-}$. 
Since the union $Y_{i_1} \cup Y_{i_2} \cup Y_{i_3} \cup Y_{i_4}$ is in general position we can slightly perturb the hyperplane $H$ to ensure that sets $Y_{i_j}$, $j=1, \ldots, 4$ are disjoint from $H$ and are still contained in the respective half-spaces.
Clearly, in this case we must have 
$$\conv(Y_{i_1} \cup Y_{i_2}) \cap \conv(Y_{i_3} \cup Y_{i_4}) \subset H^+ \cap H^- = H,$$
and it follows that the convex hulls are indeed disjoint.

We apply this fact repeatedly, in stages, as follows. 
Label elements of ${[k] \choose 4}$ by numbers from 1 to ${k \choose 4}$ arbitrarily. 
At stage $0$, we have the initial list of (original) sets
$$X_{1}^{(0)}:=X_1,\ldots,X_{k}^{(0)}:=X_k,$$
which we will be updating from step to step.  


For every $r = 1, \ldots,{k \choose 4}$, the list of sets at the end of stage $r$ will consist of $k-4$ of the sets from the list at stage $r-1$ together with $4$ new sets corresponding to the $r$-th $4$-tuple in ${[k] \choose 4 }$. More precisely, if $\ell_1<\ell_2<\ell_3<\ell_4$ is the $r$-th $4$-tuple in ${[k] \choose 4 }$, then $X_{u}^{(r)} = X_{k}^{(r-1)}$ for all $u \not\in \left\{\ell_1,\ell_2,\ell_3,\ell_4\right\}$, and the $4$ new sets are obtained by applying Lemma \ref{BU} in the three different ways to the sets $X_{\ell_1}^{(r-1)}, X_{\ell_2}^{(r-1)}, X_{\ell_3}^{(r-1)}, X_{\ell_4}^{(r-1)}$ from step $r-1$. The subsets $Y_{\ell_{j}} \subset X_{\ell_{j}}^{(r-1)}$ thus obtained for each $j=1,\ldots,4$ are then added to the new list as $X_{\ell_{j}}^{(r)}$. At the end of this process, the collection of sets $\left\{Y_{u} \subset X_u: u=1,\ldots,k\right\}$ from the final list is $2$-separated, by design. Moreover, each $u$ is involved in precisely ${k-1 \choose 3}$ $4$-tuples in ${[k] \choose 4}$, so it is easy to see that each final set $Y_{u}$ satisfies 
$$|Y_u| \geq \frac{|X_u|}{ 2^{3{k-1 \choose 3}}} > \frac{|X_u|}{ 2^{k^3}}.$$ 
\end{proofof}

\parag{Separable sets in convex position.} An important property of $2$-separated sets that we will take advantage of in the proof of Theorem \ref{main} is given by the following.

\begin{prop}\label{lem_tub}
Let $X_1, \ldots, X_k \subset \R^3$ be a $2$-separated collection of sets in convex position. Fix arbitrary $x_i \in X_i$ and consider any plane $H \subset \R^3$ which does not contain any of the points $x_i$. Then there exists another plane $\tilde H \subset \R^3$ such that for any $i \in [k]$ we have $X_i \subset \tilde H^+$ if $x_i \in H^+$ and $X_i \subset \tilde H^-$ if $x_i \in H^-$.
\end{prop}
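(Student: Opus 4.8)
The plan is to reduce the statement to a single disjointness assertion about convex hulls and then to prove that assertion using Kirchberger's theorem (Theorem~\ref{kirch}) together with a homotopy argument. Write $A=\{i:x_i\in H^+\}$ and $B=\{i:x_i\in H^-\}$; since $H$ contains none of the $x_i$, these sets partition $[k]$, and we may assume both are nonempty (otherwise a plane far from all the $X_i$ works). It suffices to prove that the two compact convex polytopes $P=\conv(\bigcup_{i\in A}X_i)$ and $Q=\conv(\bigcup_{i\in B}X_i)$ are disjoint: the separating hyperplane theorem then yields a plane strictly separating them, and orienting it so that $P\subset\tilde H^+$ gives $X_i\subset\tilde H^+$ for every $i\in A$ and $X_i\subset\tilde H^-$ for every $i\in B$, as required.

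Suppose for contradiction that $P\cap Q\neq\emptyset$. I would first apply Kirchberger's theorem in $\R^3$ to the point sets $\bigcup_{i\in A}X_i$ and $\bigcup_{i\in B}X_i$, obtaining nonempty $A'\subseteq\bigcup_{i\in A}X_i$ and $B'\subseteq\bigcup_{i\in B}X_i$ with $\conv(A')\cap\conv(B')\neq\emptyset$ and $|A'|+|B'|\le 5$. Let $I_A$ and $I_B$ be the sets of indices $i$ for which $X_i$ meets $A'$ and $B'$ respectively; then $I_A\subseteq A$ and $I_B\subseteq B$ are disjoint, with $|I_A|\le|A'|$ and $|I_B|\le|B'|$. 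A short case analysis on $(|I_A|,|I_B|)$ removes all but one configuration. If $\min(|I_A|,|I_B|)=1$, say $B'\subseteq X_{b_0}$, then $\conv(B')\subseteq\conv(X_{b_0})$ while $\conv(A')\subseteq\conv(\bigcup_{m\neq b_0}X_m)$, and convex position forces these to be disjoint. If $|I_A|=|I_B|=2$, then $A'$ and $B'$ lie in unions of two sets indexed by disjoint pairs, and $2$-separation makes $\conv(A')$ and $\conv(B')$ disjoint. Since $|A'|+|B'|\le5$, the only surviving case is $|I_A|=3$, $|I_B|=2$ with exactly one witness per set: points $p_\alpha\in X_{a_\alpha}$ for distinct $a_1,a_2,a_3\in A$ and $q_\beta\in X_{b_\beta}$ for distinct $b_1,b_2\in B$, with the triangle $\conv(p_1,p_2,p_3)$ meeting the segment $\overline{q_1q_2}$.

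To rule out this triangle--segment crossing I would run a homotopy toward the representatives. For $t\in[0,1]$ set $p_\alpha(t)=(1-t)p_\alpha+t\,x_{a_\alpha}\in\conv(X_{a_\alpha})$ and $q_\beta(t)=(1-t)q_\beta+t\,x_{b_\beta}\in\conv(X_{b_\beta})$, and let $T(t)=\conv(p_1(t),p_2(t),p_3(t))$ and $s(t)=\overline{q_1(t)q_2(t)}$. At $t=0$ the triangle and segment meet by assumption, while at $t=1$ they become $\conv(x_{a_1},x_{a_2},x_{a_3})$ and $\overline{x_{b_1}x_{b_2}}$, which are disjoint precisely because $H$ separates the representatives; this is where the hypothesis on $H$ enters. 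The set $\{t:T(t)\cap s(t)\neq\emptyset\}$ is closed and contains $0$ but not $1$, so its supremum $t^*$ satisfies $t^*<1$ and still gives an intersection. At $t^*$ the intersection point must lie on a closed edge $\overline{p_\alpha(t^*)p_{\alpha'}(t^*)}\subset\conv(X_{a_\alpha}\cup X_{a_{\alpha'}})$ or be an endpoint $q_\beta(t^*)\in T(t^*)\subset\conv(X_{a_1}\cup X_{a_2}\cup X_{a_3})$. In the first case $2$-separation of the disjoint pairs $\{a_\alpha,a_{\alpha'}\}$ and $\{b_1,b_2\}$ is violated; in the second, $q_\beta(t^*)$ lies in both $\conv(X_{b_\beta})$ and $\conv(\bigcup_{m\neq b_\beta}X_m)$, contradicting convex position. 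Hence $P\cap Q=\emptyset$.

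The step I expect to be the main obstacle is justifying that at the last crossing time $t^*$ the intersection necessarily touches an edge of $T(t^*)$ or an endpoint of $s(t^*)$, rather than passing cleanly through the interior. The subtlety is that the interpolated points need not be in general position, so I would split into two cases. If $s(t^*)$ is not coplanar with $T(t^*)$, then a crossing through the relative interior of the triangle by an interior point of the segment is transversal, hence an open and stable condition, so it would persist for $t$ slightly larger than $t^*$ and contradict maximality of $t^*$. If $s(t^*)$ is coplanar with $T(t^*)$, then a planar segment meeting a solid triangle must cross its boundary or have an endpoint inside it. Both cases deliver the edge-or-endpoint dichotomy above, which is exactly what $2$-separation and convex position are designed to contradict.
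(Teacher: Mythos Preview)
Your proof is correct and follows essentially the same route as the paper's: reduce to disjointness of the two big convex hulls, apply Kirchberger's theorem (Theorem~\ref{kirch}) to get at most five witness points, and then run a linear homotopy from these witnesses to the fixed representatives $x_i$, using that $H$ separates the representatives at $t=1$ and that a boundary-time intersection produces a segment--segment crossing that contradicts $2$-separation. The only structural difference is that you first peel off the easy cases $\min(|I_A|,|I_B|)=1$ and $|I_A|=|I_B|=2$ directly via convex position and $2$-separation, leaving only the genuine $(3,2)$ triangle--segment configuration for the homotopy; the paper instead runs the continuity argument uniformly on any $2$--$3$ split coming out of Kirchberger, without tracking how many distinct $X_i$ are hit. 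Your version is in fact a bit more careful than the paper's at the critical time $t^*$: you explicitly split into the transversal and coplanar cases and separately treat the possibility that an endpoint $q_\beta(t^*)$ lands inside the triangle (handled by convex position rather than $2$-separation), whereas the paper's ``by a continuity argument \ldots\ the segment intersects the boundary of the triangle'' leaves this endpoint case implicit.
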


\begin{proof}
Let $S^+ \subset [k]$ be the set of indices $i$ such that $x_i \in H^+$ and let $S^- = [k] \setminus S^+$. Then the existence of the plane $\tilde H$ satisfying the desired condition is equivalent to showing that 
$$
\conv\left(\bigcup_{i\in S^+} X_i\right) \cap \conv\left(\bigcup_{i\in S^-} X_i\right) = \emptyset.
$$
Suppose that this is not the case and apply Theorem \ref{kirch} to sets $X^+ = \bigcup_{i\in S^+} X_i$ and $X^- = \bigcup_{i\in S^-} X_i$. Let $A^+ \subset X^+$ and $A^- \subset X^-$ be the sets of size $r$ and $5-r$ such that $\conv(A^+) \cap \conv(A^-) \neq \emptyset$. Note that $r \neq 1, 4$ since that would contradict the convex position of the sets $X_1, \ldots, X_k$. 

So we have $r = 2$ or $3$. Let us consider $r=2$, the other case can be obtained by interchanging the roles of $X^-$ and $X^+$. Let $A^+ = \{y_1, y_2\}$ and $A^- = \{y_3, y_4, y_5\}$. For each $j = 1, \ldots, 5$ let $z_j \in \{x_1, \ldots, x_k\}$ be an element such that $y_j$ and $z_j$ belong to the same set $X_{i_j}$ for some $i_j \in [k]$. Let $B^+ = \{z_1, z_2\}$ and $B^- = \{z_3,z_4,z_5\}$. Then the sets $B^+$ and $B^-$ are separated by the plane $H$ and so $\conv(B^+) \cap \conv(B^-) = \emptyset$. By a continuity argument we conclude that one can choose points $w_j$ on the segment $[y_j, z_j]$ such that the segment $[w_1, w_2]$ intersects the boundary of the triangle $\conv(w_3, w_4, w_5)$. By symmetry, we may assume that the intersection point lies on the edge $[w_3, w_4]$. But this implies that 
$$
\conv(X_{i_1} \cup X_{i_2}) \cap \conv(X_{i_3} \cup X_{i_4}) \neq \emptyset,
$$
and since the points $x_{i_1}, x_{i_2}$ and $x_{i_3}, x_{i_4}$ lie on different sides of $H$, we must have $\{i_1, i_2\} \cap \{i_3, i_4\} = \emptyset$. This contradicts the condition that sets $X_1, \ldots, X_k$ are $2$-separated. 
\end{proof}

\section{Proof of Theorem \ref{main}}
	
Fix $\epsilon>0$, and let $X \subset \R^3$ be an arbitrary set in general position with $|X| \geq 2^{\epsilon n}$. We will show that for $n$ sufficiently large, the set $X$ must always contain a convex polytope with $n$ vertices. 

Suppose otherwise, and let $\pi: \R^3 \rightarrow \R^2$ denote a projection along a generic direction. Denote $Y = \pi(X)$. Apply Theorem \ref{lem_pv} to $Y$ with parameter $k_0=n^{1/4}$, and denote by $P$ the supporting convex polygon (a $(k_0+1)$-cap or $(k_0+1)$-cup). Denote by $T_1, \ldots, T_{k_0}$ its support, and let $e_1, \ldots, e_{k_0}$ be the corresponding supporting edges. For each $i=1,\ldots,k_0$, let $Y_i = T_i \cap Y$ be the set of points in $Y$ clustered in the triangular region $T_i$. With this notation, Theorem \ref{lem_pv} states that $|Y_i| > 2^{-40k_0} |X|$ holds for each $i=1,\ldots,k_0$. Last but not least, let us also denote by $X'_i$ the preimage of $Y_i$ in $X$. 

By Theorem \ref{lem_sep}, one can choose subsets $X_i \subset X'_i$ so that the collection $X_1, \ldots, X_{k_0}$ is $2$-separated. We have 
$$
|X_i| \ge 2^{-k_0^3} |X_i'| = 2^{-k_0^3} |Y_i| \ge 2^{-40 k_0 - k_0^3} |X| \ge |X|^{1-\delta}, 
$$
for some $\delta=\delta(\epsilon) \rightarrow 0$ as $n\rightarrow \infty$. 
For every $i = 1, \ldots, k_0$ pick an arbitrary point $x_i \in X_i$ and note that the projections $\pi(x_1), \ldots, \pi(x_{k_0})$ are consecutive vertices of a convex polygon. Let $k$ be the largest number such that $AB(k) < k_0$. Apply Corollary \ref{cor_ab} to the points $x_1, \ldots, x_{k_0}$ and denote the resulting set of indices by $\{i_1, \ldots, i_k\}$. For simplicity let us relabel indices so that $i_1 = 1, \ldots, i_k = k$.

\begin{prop}\label{5_lem}
Let $J = \{j_1 < j_2 < j_3\} \subset [k]$. There exist (unbounded) polytopes $P_J^1$, $P_J^2$ with at most $3$ edges each such that for $j \in [k]$ we have
$$
X_{j} \subset \begin{cases}
P_J^1,\text{ if }j \in [1, j_1) \cup (j_2, j_3],\\ 
P_J^2,\text{ if }j \in [j_1, j_2) \cup (j_3, k],
\end{cases}
$$
and such that sets $P^1_J, P^2_J, \conv(X_{j_2})$ are in convex position. Equivalently, no line $l \subset \R^3$ intersects all 3 sets $P_J^1$, $P_J^2$, $\conv(X_{j_2})$ at once. 
\end{prop}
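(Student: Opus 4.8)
The plan is to realise $P_J^1$ and $P_J^2$ as two of the eight ``octants'' cut out by three suitably chosen planes, exploiting the fact that the prescribed partition of $[k]\setminus\{j_2\}$ is symmetric enough to produce a clean separation pattern. Write $I_1=[1,j_1)\cup(j_2,j_3]$ and $I_2=[j_1,j_2)\cup(j_3,k]$ for the two index sets in the statement, and set $A=\bigcup_{i\in I_1}X_i$, $B=\bigcup_{i\in I_2}X_i$ and $C=X_{j_2}$, so that $I_1$, $\{j_2\}$, $I_2$ partition $[k]$. I will first separate the representative points $x_i$ in three different ways using Corollary \ref{cor_ab}, then thicken each separation to the full sets using Proposition \ref{lem_tub}, and finally read off $P_J^1$ and $P_J^2$ as trihedral cones.

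For a triple $1\le a\le b\le c\le k$, Corollary \ref{cor_ab} states that the convex hulls of the point sets indexed by $U=\{1,\dots,a-1\}\cup\{b,\dots,c-1\}$ and $V=\{a,\dots,b-1\}\cup\{c,\dots,k\}$ are disjoint. Taking $(a,b,c)=(j_1,j_2+1,j_3+1)$ gives $U=I_1$, $V=I_2\cup\{j_2\}$; taking $(a,b,c)=(j_1,j_2,j_3+1)$ gives $U=I_1\cup\{j_2\}$, $V=I_2$; and taking $(a,b,c)=(1,j_2,j_2+1)$ gives $U=\{j_2\}$, $V=I_1\cup I_2$. Thus at the level of representatives we obtain, respectively, that $\{x_i:i\in I_1\}$ is separated from $\{x_i:i\in I_2\cup\{j_2\}\}$, that $\{x_i:i\in I_2\}$ is separated from $\{x_i:i\in I_1\cup\{j_2\}\}$, and that $x_{j_2}$ is separated from all the others. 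Each disjointness of finite hulls yields a separating plane avoiding every $x_i$ after a tiny perturbation, and I orient these as $H_1,H_2,H_3$ so that the $A$-points lie in $H_1^+$, the $B$-points in $H_2^+$, and $x_{j_2}$ in $H_3^+$, with the complementary point sets on the negative sides.

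Applying Proposition \ref{lem_tub} to each of $H_1,H_2,H_3$ promotes them to planes $\tilde H_1,\tilde H_2,\tilde H_3$ for which the \emph{full} sets land on the correct sides: $A\subset\tilde H_1^+$, $B\cup C\subset\tilde H_1^-$; $B\subset\tilde H_2^+$, $A\cup C\subset\tilde H_2^-$; and $C\subset\tilde H_3^+$, $A\cup B\subset\tilde H_3^-$. In the sign coordinates determined by $(\tilde H_1,\tilde H_2,\tilde H_3)$ this places $A$ in the octant $(+,-,-)$, $B$ in $(-,+,-)$ and $C$ in $(-,-,+)$. I then define
$$
P_J^1=\tilde H_1^+\cap\tilde H_2^-\cap\tilde H_3^-,\qquad P_J^2=\tilde H_1^-\cap\tilde H_2^+\cap\tilde H_3^-,
$$
each an intersection of three half-spaces and hence an unbounded polytope with at most three edges, and by construction $X_j\subset P_J^1$ for $j\in I_1$ and $X_j\subset P_J^2$ for $j\in I_2$, as required. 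Finally, $\tilde H_3$ separates $\conv(C)$ from $P_J^1\cup P_J^2$ (both inside $\tilde H_3^-$), $\tilde H_1$ separates $P_J^1$ from $P_J^2\cup\conv(C)$, and $\tilde H_2$ separates $P_J^2$ from $P_J^1\cup\conv(C)$. These three separations are exactly the assertion that $P_J^1,P_J^2,\conv(C)$ are in convex position, and convex position rules out a common transversal: any line meeting all three would have its middle intersection point lying on the segment joining the other two, hence inside the convex hull of the outer two sets.

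I expect the conceptual content to be entirely in the choice of the three triples $(a,b,c)$ that manufacture the symmetric separation pattern $A\mid BC$, $B\mid AC$, $C\mid AB$; the only genuine nuisance is that the three closed cones share boundary faces (for instance $P_J^1$ and $P_J^2$ meet along the ray $\tilde H_1\cap\tilde H_2\cap\tilde H_3^-$), so the three separations above must be made \emph{strict}. Since $A,B,C$ are finite and the relevant hulls are disjoint, each separation has a positive gap, and nudging each $\tilde H_i$ slightly toward the unique set it isolates keeps that set strictly interior to its cone while opening a genuine slab between the cones, removing the shared faces without increasing the edge count. This strictness bookkeeping is the main obstacle, and it is routine.
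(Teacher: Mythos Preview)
Your proof is correct and follows essentially the same route as the paper's: build $P_J^1$ and $P_J^2$ as trihedral cones cut out by three planes, the first two obtained from Corollary \ref{cor_ab} with the triples $(j_1,j_2+1,j_3+1)$ and $(j_1,j_2,j_3+1)$ and then thickened via Proposition \ref{lem_tub}. The one genuine difference is the source of the third plane separating $X_{j_2}$ from everything else: the paper takes it to be the vertical plane $\pi^{-1}(e_{j_2})$ through the supporting edge of the P\'or--Valtr polygon, whereas you obtain it uniformly by a third instance of Corollary \ref{cor_ab} with $(a,b,c)=(1,j_2,j_2+1)$ followed by Proposition \ref{lem_tub}. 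Both work; your version is slightly more self-contained in that it does not reach back to the planar supporting structure, while the paper's choice is a touch more explicit. Your attention to the closed/open half-space boundary issue is also warranted---the paper glosses over it---and your perturbation fix is the right one.
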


\begin{proof}
Denote 
\begin{align*}
Z_1 &= X_{1} \cup \ldots \cup X_{j_1-1},\\
Z_2 &= X_{j_1} \cup \ldots \cup X_{j_2-1},\\
Z_3 &= X_{j_2+1} \cup \ldots \cup X_{j_3},\\
Z_4 &= X_{j_3+1} \cup \ldots \cup X_{k}.
\end{align*}
The conclusion of Corollary \ref{cor_ab} implies that the convex hulls of the sets
\begin{align*}
\{x_{1}, \ldots, x_{j_1-1}\} \cup \{x_{j_2+1}, \ldots, x_{j_3}\}\ \ \ \text{and}\ \ \
\{x_{j_1}, \ldots, x_{j_2}\} \cup \{x_{j_3+1}, \ldots, x_{k}\}
\end{align*}
are disjoint, so by Proposition \ref{lem_tub} there must exist a plane $H_1$ which also separates $Z_1 \cup Z_3$ from $Z_2 \cup Z_4 \cup X_{j_2}$. Similarly, the conclusion of Corollary \ref{cor_ab} also implies that the convex hulls of the sets
\begin{align*}
\{x_{1}, \ldots, x_{j_1-1}\} \cup \{x_{j_2},\ldots,x_{j_3}\}\ \ \ \text{and}\ \ \
\{x_{j_1}, \ldots, x_{j_2-1}\} \cup \{x_{j_3+1}, \ldots, x_{k}\}
\end{align*}
are disjoint, so by Proposition \ref{lem_tub} we must also have a plane $H_2$ which separates $Z_1 \cup Z_3 \cup X_{j_2}$ from $Z_2 \cup Z_4$.
Last but not least, let $H_0$ be the plane spanned by the set $\pi^{-1}(e_{j_2})$. Clearly, $H_0$ separates $X_{j_2}$ from $Z_1 \cup Z_2 \cup Z_3 \cup Z_4$.

Choose the orientations of planes $H_0, H_1, H_2$ so that $X_{j_2} \subset H_0^+$, $Z_1 \subset H_1^+$ and $Z_2 \subset H_2^+$.
Now we define
$$
P_J^1 = H_0^- \cap H_1^+ \cap H_2^+,
$$
$$
P_J^2 = H_0^- \cap H_1^- \cap H_2^-.
$$
Then $X_{j_2}$ is separated from $P_J^1 \cup P_J^2$ by the plane $H_0$, and the polytope $P_J^\epsilon$, $\epsilon \in \{1, 2\}$, is separated from $P_J^{3-\epsilon} \cup X_{j_2}$ by the plane $H_\epsilon$.

Clearly, $P_J^\epsilon$ is an intersection of $3$ half-spaces and so it has at most $3$ edges.
\end{proof}

Fix $J = \{j_1 < j_2 < j_3\} \in {[k] \choose 3}$, and define a partial order on $X_{j_2}$ as follows. For $x, x' \in X_{j_2}$ we say that $x \prec_J x'$ if $x \in \conv(\{x'\} \cup P_J^1)$. Observe that an $\prec_J$-antichain is a $P_J^1$-free set and by Proposition \ref{5_lem} a $\prec_J$-chain is a $P_J^2$-free set.
Let us color the triple $J$ red if there is a $P_J^1$-free subset in $X_{j_2}$ of size $|X_{j_2}|^{1/2}$ and blue if there is a $P_J^2$-free subset of size $|X_{j_2}|^{1/2}$.
By Dilworth's theorem \cite{RD50}, note that this represents a well-defined red-blue coloring of the set of triples ${[k] \choose 3}$. 

By Ramsey's theorem \cite{Ramsey}, it follows that for some $t \gg_k 1$, we can find a monochromatic clique $\{j_1, \ldots, j_t\} \subset [k]$. Without loss of generality, let us assume that it is a red clique. Then for any $l \in [t]$, we have
$$
\bigcup_{m:~ m \neq l, l-1} X_{j_m} \subset P^1_{j_{l-1}, j_l, j_t}
$$
and since $\{j_{l-1}, j_l, j_t\}$ is red, we can choose a $P^1_{j_{l-1}, j_l, j_t}$-free set $Z_l \subset X_{j_l}$ of size $|X_{j_l}|^{1/2} \ge |X|^{1/2(1-\delta)}$. If $|Z_l|> {n + 2n/t \choose 2n/t}^3$, then Proposition \ref{lem_es} ensures that $Z_l$ contains either a convex subset of size $n$ or $P^1_{j_{l-1}, j_l, j_t}$-cap of size $2n/t$. 

However, if our original set $X \subset \mathbb{R}^{3}$ does not contain convex sets of size $n$, the former case is automatically impossible for every $l \in [t]$. On the other hand, if each set $Z_{j}$ contains a $P^1_{j_{l-1}, j_l, j_t}$-cap $K_l \subset Z_l$ of size $2n/t$, for every $l \in [t]$, then it is easy to see that $K = K_1 \cup K_3 \cup \ldots \cup K_{2\lceil t/2 \rceil -1}$ is a convex set of size at least $n$.
Indeed, on one hand, for any $l \le t/2$, a point $x \in K_{2j+1}$ can't lie in the convex hull
$$
\conv(P^1_{j_{2l}, j_{2l+1}, j_t} \cup (K_{2j+1} \setminus \{x\})),
$$
whereas, on the other hand, we have $K_{2r+1} \subset P^1_{j_{2l}, j_{2l+1}, j_t}$ for any $r \neq l$; therefore, the point $x$ also does not lie in the convex hull 
$$
\conv\left(\bigcup_{r\neq l} K_{2r+1} \cup (K_{2l+1} \setminus \{x\})\right) = \conv(K \setminus \{x\}).
$$
This implies that the set $K$ is in convex position.

We conclude that if $X$ does not contain a convex set of size $n$ then
$$
|X| \le |Z_l|^{2/(1-\delta)} \le {n + 2n/t \choose 2n/t}^{6/(1-\delta)} \le t^{C n/t},
$$
for some constant $C>0$ and all sufficiently large $n$. Since $t \rightarrow \infty$ as $n \rightarrow \infty$, this implies $|X| = 2^{o(n)}$. This completes the proof of Theorem \ref{main}. 

\smallskip

\remark{ One can verify that our argument produces an $o(n)$ term which is of the form $\frac{n}{\log_{(5)} n}$, where $\log_{(k)}$ denotes the $k$-th iterated logarithm function. As already alluded to in the comment made after Corollary \ref{cor_ab}, several slight optimizations are possible. For example, one can save a log in the upper bound of the above-below function $AB(k)$ by using its semialgebraic nature, and then relying on improved quantitative bounds for semialgebraic Ramsey numbers. Another improvement can come from a closer attention to the last application of Ramsey's theorem in this section; the red/blue coloring of ${[k] \choose 3}$ constructed in the proof of Theorem \ref{main} has an additional monotonicity property: for any set of indices $j_1 \le j_2 < j_3 < j_4 \le j_5$, if the triple $\{j_1, j_3, j_4\}$ is red then the triple $\{j_2, j_3, j_5\}$ is also red. If $R_m(t)$ denotes the smallest $k$ such that any such coloring of ${[k] \choose 3}$ contains a monochromatic clique of size $t$, one can show that $R_m(t)$ is exponential in $t$, which in turn yields a superior dependence between our parameters $t$ and $k$ than the double exponential upper bound on $R_3(n)$ provides. Since all such refinements only get to reduce the number of iterations of the logarithm in the ultimate bound (at the price of substantial technicalities), we decided to not pursue these in any more detail in the current paper in order to maximize the clarity of the main new ideas. 
} 

\section{Proof of Theorem \ref{fr}}

Let $P \subset \R^d$ be a convex polytope. For $i = 0, \ldots, d$ we denote by $\F_i(P)$ the set of faces of $P$ of dimension $i$. For a subset of faces $S \subset \F_{d-1}(P)$ let $R(P, S)$ be the set of points $x \in \R^d$ such that a face $F \in \F_{d-1}(P)$ separates $x$ from $P$ if and only if $F \in S$. For $i = 0, \ldots, d$ we denote $f_i(P) = |\mathcal F_i(P)|$.

\begin{prop}\label{lem_f}
For any simplicial polytope $P \subset \R^d$ and any $t \ge 1$ the number of sets $S \subset \F_{d-1}(P)$ of size $t$ such that $R(P, S)$ is non-empty is at most ${d t \choose t} f_{d-1}(P)$.
\end{prop}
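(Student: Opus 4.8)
The plan is to translate the proposition into a count of connected subsets in the \emph{facet-adjacency graph} $G$ of $P$: its vertices are the facets $\F_{d-1}(P)$, and two facets are joined by an edge whenever they share a ridge (a $(d-2)$-face). Because $P$ is simplicial, each facet is a $(d-1)$-simplex and hence has exactly $d$ ridges, each lying in precisely two facets; thus $G$ is $d$-regular. The first step is to show that whenever $R(P,S) \neq \emptyset$ the set $S$ spans a connected subgraph of $G$. This reduces the proposition to bounding the number of connected $t$-vertex subsets of $G$, and the second step carries out that count, yielding the factor $\binom{dt}{t}$ while the factor $f_{d-1}(P)$ comes from the choice of a root vertex.

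For the connectivity step, fix $x \in R(P,S)$. By definition $S$ is exactly the set of facets $F$ for which $x$ lies strictly on the outer side of $\mathrm{aff}(F)$, i.e. the facets visible from $x$. For a convex polytope and an exterior point, the visible portion of the boundary $U = \{q \in \partial P : (x,q)\cap \mathrm{int}(P)=\emptyset\}$ is a closed topological $(d-1)$-disk, and it coincides with the union $\bigcup_{F\in S} F$ of the closed visible facets, the horizon $\partial U$ running along the ridges that separate facets in $S$ from facets outside $S$. Since $U$ is connected and is assembled from the facets of $S$ glued along ridges, $S$ induces a connected subgraph of $G$; hence the number of nonempty $R(P,S)$ with $|S|=t$ is at most the number of connected $t$-vertex subsets of $G$.

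It remains to bound the latter. I will use that in any graph of maximum degree $d$ the number of connected $t$-subsets containing a fixed vertex $v$ is at most $\binom{dt}{t}$. Indeed, grow the canonical (BFS) spanning tree of such a subset from $v$, inspecting at each of the $t$ vertices its at most $d$ neighbours in a fixed global order and recording which of these inspections produces a tree edge; this gives an injective binary encoding in which exactly $t-1$ of the at most $dt$ inspected slots are marked, so the number of subsets is at most $\binom{dt}{t-1}\le\binom{dt}{t}$. Charging each connected $t$-subset of $G$ to its smallest vertex under a fixed ordering of the $f_{d-1}(P)$ vertices, each vertex is charged at most $\binom{dt}{t}$ times, so there are at most $f_{d-1}(P)\binom{dt}{t}$ connected $t$-subsets in all, which is the desired bound. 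The step requiring the most care is the geometric claim that a realized visible set is connected in the ridge graph — it rests on the standard fact that the visible boundary of a convex polytope seen from outside is a disk — whereas once connectivity and $d$-regularity are in hand the counting is routine and the slot encoding delivers exactly the constant $\binom{dt}{t}$.
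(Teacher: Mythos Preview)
Your proof is correct and takes essentially the same route as the paper: show that any realized visible set $S$ is connected in the facet-adjacency (ridge) graph of $P$, then bound the number of connected $t$-vertex subsets of this $d$-regular graph by $\binom{dt}{t}$ per root and multiply by $f_{d-1}(P)$. The paper differs only in execution---it establishes connectivity by an explicit central projection of $P$ through $x$ onto a separating hyperplane (rather than invoking the visible-boundary-is-a-disk fact directly) and bounds the connected-subgraph count by embedding into the infinite $d$-regular tree and quoting the exact formula $\frac{1}{(d-1)t+1}\binom{dt}{t}$ from Knuth (rather than your BFS encoding giving $\binom{dt}{t-1}\le\binom{dt}{t}$).
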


\begin{proof}
Let $G(P)=(V,E)$ be the adjacency graph of $(d-1)$-dimensional faces of $P$. This is a graph with $V = \F_{d-1}(P)$, where two faces $F, F'$ are adjacent if their intersection is a $(d-2)$-dimensional face of $P$. Let $S \subset \F_{d-1}(P)$ be such that $R(P, S)$ is non-empty and contains a point $x \in \R^d$. We claim that then $S$ is connected in $G$.
If $x \in P$ then $S = \emptyset$ is connected. Now suppose that $x \not \in P$ and denote $S= \{F_1, \ldots, F_k\}$, $k\ge 1$. Let $H$ be a hyperplane separating $x$ from $P$. Let $P'$ be the projection of $P$ on the plane $H$ through the point $x$. In other words, $P' = \conv(P \cup \{x\}) \cap H$. 

Let $\phi: P' \rightarrow P$ be a function which maps a point $y \in P'$ to the first point of intersection of the line $(x, y)$ with $P$. We claim that the image of $\phi$ is precisely the union $U = F_1 \cup \ldots \cup F_k$. Indeed, suppose that $F \in \F_{d-1}(P)$ contains the point $\phi(y)$ for some $y \in P'$. Then it is clear that $F$ separates $P$ from $x$ and so $F \in S$, which implies $\phi(y) \in U$. Conversely, if $z \in F_i$ for some $i = 1, \ldots, k$ then $F_i$ separates $P$ from $x$ and so the interval $[x, z]$ does not contain any other points of $P$ and so $z = \phi(y)$, where $y$ is the point of intersection of $H$ with $[x, z]$.

Since the sets $F'_i = \phi^{-1}(F_i)$ form a decomposition (up to polyhedra of smaller dimension) of a convex polytope $P'$ into convex polytopes, the adjacency graph of $F'_i$-s is connected. On the other hand, for any $i,j$ we have $\dim F'_i \cap F'_j = \dim F_i \cap F_j$, which implies that the adjacency graphs of $F_i$-s and $F'_i$-s are isomorphic and so $S$ is connected as well.

The statement now follows from the following graph theoretic fact.
\begin{lemma} \label{Knuth}
Let $G$ be a graph with maximum degree $d$, and let $t\ge 1$. Then the number of connected subgraphs in $G$ of size $t$ is at most $\frac{1}{(d-1)t+1} {d t \choose t} |V(G)|$.
\end{lemma}

\begin{proofof}{Lemma \ref{Knuth}}
Given any vertex $v \in V$, we claim that there are at most $N=\frac{1}{(d-1)t+1} {d t \choose t}$ connected subgraphs in $G$ containing $v$. Indeed, we claim that the maximal number of connected subgraphs of a given size $t$ containing a fixed vertex is attained when $G$ is the infinite $d$-regular tree $T_d$. Let $P_v(G)$ be the set of paths $p$ in $G$ starting from $v$ which do not contain `turning points', i.e. no edge of $G$ appears in $p$ twice in a row. Say that two paths $p, p' \in P_v(G)$ are connected by an edge if one can be obtained from another by adding one edge at the end. Note that this defines a graph on $P_v(G)$, which is in fact an infinite tree with maximum degree $d$. Moreover, note that the assignment 
$$p \mapsto \ \ \text{the end vertex of}\  p\ \text{which is not}\ v$$ defines a graph homomorphism $f: P_v(G) \rightarrow G$. For any connected set $S \subset V(G)$ containing $v$ we can construct a connected subset $S' \subset P_v(G)$ as follows: let $T$ be a spanning tree of $S$ in $G$, then for $x \in S$ consider the unique path $x'$ from $x$ to $v$ in $T$ and let $S'$ be the set of all such paths.

Clearly, we have $f(S') = S$, so each connected set $S \subset V(G)$ defines a unique connected set $S' \subset P_v(G)$ of the same size. Thus, the number of connected subgraphs of size $t$ in $G$ containing $v$ is at most the number of connected subgraphs in $P_v(G)$ containing the empty path. Next, since the maximal degree in $P_v(G)$ is $d$, we can embed it into the infinite $d$-regular tree $T_d$.
Since the number of subtrees in $T_d$ of size $t$ containing a fixed vertex is precisely $N$ (see for example \cite{Knuth}), the conclusion follows.
\end{proofof}

The maximum degree of the adjacency graph of our polytope $P \subset \mathbb{R}^{d}$ is at most $d$ and the sets $S \subset \F_{d-1}(P)$ of size $t$ such that $R(P, S)$ is non-empty induce connected subgraphs of $G$ of size $t$, so Lemma \ref{Knuth} applied for $G(P)$ shows that the number of sets $S \subset \F_{d-1}(P)$ of size $t$ with $R(P, S) \neq \emptyset$ is indeed at most ${d t \choose t} f_{d-1}(P)$.
\end{proof}

For a convex polytope $P$ and a point $x$ in $\R^d$ let $S(P, x) \subset \F_{d-1}(P)$ be the set of faces which separate $x$ from $P$. 

\begin{lemma}\label{o4}
Let $x_1, \ldots, x_k \in \R^d \setminus P$ be such that sets $S(P, x_i)$ are pairwise disjoint. Then points $x_1, \ldots, x_k$ are in convex position. 
\end{lemma}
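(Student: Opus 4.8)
The plan is to argue by contradiction through the defining property of convex position: the points $x_1, \ldots, x_k$ fail to be in convex position precisely when some $x_i$ lies in the convex hull of the remaining points. So, after relabeling, suppose that $x_1 \in \conv(\{x_2, \ldots, x_k\})$ and write $x_1 = \sum_{j=2}^k \lambda_j x_j$ as a convex combination with $\lambda_j \ge 0$ and $\sum_j \lambda_j = 1$. Since $x_1 \notin P$ and $P$ is the intersection of the closed half-spaces bounded by its facet hyperplanes, the set $S(P, x_1)$ is non-empty. I will derive a contradiction by showing that any facet $F \in S(P, x_1)$ must also separate one of $x_2, \ldots, x_k$ from $P$, violating pairwise disjointness.

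The key step is a one-line averaging argument. Fix $F \in S(P, x_1)$ and let $\langle a_F, \cdot\rangle = c_F$ be the supporting hyperplane of $P$ spanned by $F$, oriented so that $P \subset \{\langle a_F, \cdot\rangle \le c_F\}$. By the definition of separation we have $\langle a_F, x_1 \rangle > c_F$. Substituting the convex combination and using $\sum_j \lambda_j = 1$ gives $\sum_{j=2}^k \lambda_j (\langle a_F, x_j\rangle - c_F) > 0$, so at least one term is positive; that is, $\langle a_F, x_{j_0}\rangle > c_F$ for some $j_0 \ge 2$. Equivalently, $F$ separates $x_{j_0}$ from $P$, i.e. $F \in S(P, x_{j_0})$.

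This yields $F \in S(P, x_1) \cap S(P, x_{j_0})$ with $1 \ne j_0$, contradicting the hypothesis that the sets $S(P, x_i)$ are pairwise disjoint. Since such an $F$ exists, the assumed convex dependence is impossible, and the points are in convex position.

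I do not expect a serious obstacle: the entire content is the averaging inequality, and the only point requiring mild care is the strict-versus-weak separation convention. One should confirm that ``$F$ separates $x$ from $P$'' is read so that $x$ lies in the open outer half-space of the hyperplane spanned by $F$, which is what makes both $\langle a_F, x_1\rangle > c_F$ and the conclusion $F \in S(P, x_{j_0})$ hold with strict inequalities. In the earlier notation the argument in fact establishes the slightly stronger containment $S(P, x_1) \subseteq \bigcup_{j \ge 2} S(P, x_j)$ whenever $x_1 \in \conv(\{x_2, \ldots, x_k\})$, from which the lemma is immediate.
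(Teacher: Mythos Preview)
Your proof is correct and uses essentially the same idea as the paper: a facet $F \in S(P, x_i)$ yields a hyperplane with $x_i$ strictly on one side and, by the disjointness hypothesis, every other $x_j$ on the $P$-side. The paper simply states this directly to conclude convex position, whereas you reach the same observation via a contradiction and an averaging step; the content is the same, just packaged slightly differently.
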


\begin{proof}
Indeed, take any $F \in S(P, x_i)$, then $x_i$ is separated by $F$ from $P$. But for any $j \neq i$ we have $F \not \in S(P, x_j)$ and so $x_j$ and $P$ are on the same side from $F$. This implies that $F$ separates $x_i$ from all the points $x_j$, $j \neq i$, and so $x_1, \ldots, x_k$ are in convex position.
\end{proof}

\begin{lemma}\label{4c}
Let $X \subset \R^3$ be a finite set in convex position. Then there is a set $Y \subset X$ of size at least $|X|/4$ such that the sets $S(\conv(X\setminus Y), x)$, $x \in Y$, are pairwise disjoint.
\end{lemma}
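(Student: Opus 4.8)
The plan is to produce $Y$ as a large \emph{independent set} in the graph $G$ of the polytope $P = \conv(X)$, and then to show that independence alone already forces the visibility regions $S(\conv(X \setminus Y), x)$, $x \in Y$, to be pairwise disjoint. Since $X$ is in convex position, every point of $X$ is a vertex of the $3$-polytope $P$ (assuming $\conv(X)$ is full-dimensional, which we may; the planar case is an easy separate check), so by Steinitz's theorem $G$ is a simple planar graph. The four color theorem then yields a proper $4$-colouring of $G$, and taking $Y$ to be the largest colour class gives an independent set with $|Y| \ge |X|/4$. It thus remains only to verify the disjointness.

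Write $Q = \conv(X \setminus Y)$. Two regions $S(Q,x)$ and $S(Q,x')$ meet exactly when some facet $F \in \F_{2}(Q)$ is separated from $Q$ by both $x$ and $x'$, i.e. when at least two points of $Y$ lie strictly beyond the supporting plane of $F$. So it suffices to prove that \emph{for every facet $F$ of $Q$ at most one point of $Y$ lies beyond it}. Fix a facet $F$, and let $h$ be its supporting hyperplane, oriented so that $Q \subset h^{-}$. First, since $F$ is a facet of $Q = \conv(X \setminus Y)$, no point of $X \setminus Y$ lies in the open half-space $h^{+}$; hence the set $B_F$ of vertices of $P$ lying strictly in $h^{+}$ satisfies $B_F \subseteq Y$. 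Second — and this is the crucial point — the set $B_F$ induces a \emph{connected} subgraph of $G$. Granting this, $B_F$ is a connected subset of the independent set $Y$, so $|B_F| \le 1$, which is precisely the statement we want (and note $F \in S(Q,x)$ iff $x \in B_F$).

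The main work is therefore the connectivity statement, which is the standard fact that a linear functional restricted to the graph of a polytope has connected upper level sets. Concretely, choosing $\phi$ with $h = \{\phi = c\}$ and $h^{+} = \{\phi > c\}$, from any vertex $v$ of $P$ with $\phi(v) > c$ one can repeatedly follow an edge of $P$ along which $\phi$ strictly increases, until reaching the $\phi$-maximal vertex $M$; every vertex on such a monotone edge-path again satisfies $\phi > c$, so any two vertices of $B_F$ are joined inside $B_F$ through $M$. I expect this lemma — together with the minor bookkeeping of ruling out the degenerate non-full-dimensional case and of choosing $\phi$ generically so that the monotone-path argument has no ties — to be the only real obstacle; the reduction to it through Steinitz's theorem and the four color theorem is routine. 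Finally, feeding the resulting disjointness of the sets $S(Q,x)$ into Lemma \ref{o4} certifies that the selected points are in convex position, as will be needed downstream.
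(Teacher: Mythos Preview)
Your proof is correct and follows essentially the same approach as the paper: take $Y$ to be a largest colour class in a proper $4$-colouring of the (planar) graph of $\conv(X)$, and then argue that the set of vertices of $\conv(X)$ lying strictly beyond a given facet of $\conv(X\setminus Y)$ cannot contain two independent vertices. The only cosmetic difference is that the paper states the needed polytope fact in the weaker form ``an open half-space containing at least two vertices of a polytope contains an edge of that polytope'' rather than your full connectivity statement, but both versions rest on the same monotone-edge-path reasoning, so the arguments are effectively identical.
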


\begin{proof}
Let $G_{X}$ be the usual graph of the polytope $\conv(X)$. Since $G_{X}$ is planar, the Four Color Theorem states that its vertices $X$ can be colored with $4$ colors in a way such that no edge of $G_{X}$ connects vertices of the same color, or, in other words, that the chromatic number of $G_{X}$, which we denote as usual by $\chi(G_{X})$, is at most $4$ (see for example \cite[Chapter 5]{Diestel} and the references therein). In particular, this implies that the so-called independence number $\alpha(G_{X})$ satisfies the inequality
$$\alpha(G_{X}) \geq \frac{|X|}{\chi(G_{X})} \geq \frac{|X|}{4}.$$
Hence there must exist an independent set $Y \subset X$ of size at least $|X|/4$ in $G_{X}$. We claim that such a set $Y$ has the required property that all the sets in the collection $\left\{S(\conv(X\setminus Y), x):\ x \in Y\right\}$ are pairwise disjoint. Indeed, denote $P = \conv(X \setminus Y)$, and suppose for the sake of contradiction that for some $x, y \in Y$ the sets $S(P, x)$, $S(P, y)$ have a common element $F$.
Let $H$ be the plane containing $F$ such that $P \subset H^+$. Clearly, the set $Z = X \setminus H^+$ is contained in $Y$ and contains at least 2 elements $x, y$. Note that if a half-space contains at least 2 vertices of a polytope then it contains an edge of this polytope. Applying this to the polytope $\conv(X)$ and the open half-space $H^-$ we conclude that $Z$ is not an independent set in the graph of $\conv(X)$. But $Z \subset Y$ and $Y$ is independent, which represents a contradiction.
\end{proof}

Now we can prove Theorem \ref{fr}. Let $\mathcal{X} \subset \R^3$ be a set of size $N \ge ES_3(8n)$ in general position. Then by a standard double counting argument $\mathcal{X}$ contains at least 
$$
\frac{{N \choose 8n}}{{ES_3(8n) \choose 8n}} \geq  \left(\frac{N}{ES_3(8n)}\right)^{8n}
$$
$8n$-element subsets in convex position. Given a convex $8n$-element set $X \subset \mathcal{X}$, apply Lemma \ref{4c} and let $Y_X \subset X$ be the resulting set of size $2n$. Let $Z_X = X \setminus Y_X$. By the pigeonhole principle there is a $6n$-element subset $Z \subset \mathcal{X}$ such that $Z = Z_X$ holds for at least 
$$
{N \choose 6n}^{-1} \left(\frac{N}{ES_3(8n)}\right)^{8n}
$$
convex $8n$-element subsets $X \subset \mathcal{X}$. Denote $P = \conv(Z)$. Now for each $X$ with $Z_X = Z$, consider the following arrangement of $2n$ disjoint sets 
$$
\mathcal S_X = \{S(P, x)~|~ x \in Y_X\}.
$$
By \cite[Section 5, Proposition 5.5.3]{Mat}, note that $P$ has $f_2(P) \le 2 f_0(P) = 12 n$ faces. Fix a sequence of numbers $a_1, \ldots, a_{2n} \ge 1$ such that $a_1 + \ldots + a_{2n} \le 12 n$. By Proposition \ref{lem_f} the number of ways to choose sets $S_1, \ldots, S_{2n}$ such that $|S_i| = a_i$ and $S_i = S(P, x)$ for some $x \in \R^3$ is at most
$$
\prod_{i=1}^{2n} 12 n {3 a_i \choose a_i} \le (12n)^{2n} (3e)^{a_1 + \ldots + a_{2n}} \le 12^{2n} (3e)^{12 n} n^{2n}.
$$
The number of ways to choose the sequence $(a_1, \ldots, a_{2n})$ is ${12n \choose 2n}$. By the pigeonhole principle there exists a collection of sets $\mathcal S = \{S_1, \ldots, S_{2n}\}$ such that $\mathcal S_X = \mathcal S$ for at least
\begin{equation}\label{eql}
{12 n \choose 2n}^{-1} 12^{-2n} (3e)^{-12n} n^{-2n} {N \choose 6n}^{-1} \left(\frac{N}{ES_3(8n)}\right)^{8n} \ge \frac{N^{2n}}{ES_3(8n)^{8n}} \frac{n^{4n}}{C^n} \ge \frac{N^{2n}}{ES_3(8n)^{8n}}
\end{equation}
convex $8n$-element sets $X \subset \mathcal{X}$. Here $C>0$ is an absolute constant and the last inequality holds for sufficiently large $n$.

Now recall that for $x \in \mathcal{X}$ we have $S(P, x) = S_i$ if and only if $x \in R(P, S_i)$. So a set 
$$X = Y \cup \{x_1, \ldots, x_{2n}\}$$ satisfies $\mathcal S_X = \mathcal S$ if and only if after a permutation of indices we have $x_i \in R(P, S_i)$ for $i = 1, \ldots, 2n$. By Lemma \ref{o4} any arrangement of points $x_i \in R(P, S_i)$ is in convex position, so the number of $8n$-element sets $X \subset \mathcal{X}$ in convex position such that $\mathcal S_X = \mathcal S$ is equal to 
$$
|\mathcal{X} \cap R(P, S_1)| \cdot \ldots \cdot |\mathcal{X} \cap R(P, S_{2n})|.
$$
Using (\ref{eql}) and the upper bound $|\mathcal{X} \cap R(P, S_i)| \le N$, we can find indices $i_1, \ldots, i_n \in [2n]$ such that
$$
|\mathcal{X} \cap R(P, S_{i_j})| \ge \frac{N}{ES_3(8n)^8}
$$
holds for any $j = 1, \ldots, n$. Then the sets $X_j = \mathcal{X}\cap R(P, S_{i_j})$ clearly satisfy the statement of the theorem.

\section{Concluding remarks}

In this paper, we proved that $ES_{d}(n) = 2^{o(n)}$ holds for all $d \geq 3$, thus showing that in space and in higher dimensional Euclidean spaces only subexponentially many points are needed in order to ensure the presence of a convex polytope on $n$ vertices. 

The best known lower bound for $ES_{d}(n)$ is due to K\'arolyi and Valtr \cite{KV03}, who showed that there exists a set of $2^{c_{d}n^{\frac{1}{d-1}}}$ points in $\mathbb{R}^{d}$ in general position which contains no convex subset of size $n$, namely 
$$ES_{d}(n) \geq 2^{c_{d}n^{\frac{1}{d-1}}}.$$
Here $c_{d} > 0$ is a constant which depends solely on the dimension $d$. The construction begins with a singleton set $X_{0}$, and $X_{i+1}$ is obtained from $X_i$ by replacing each point $x \in X_i$ with the pair of points
$$x + (\epsilon_i^{d},\epsilon_{i}^{d-1},\ldots,\epsilon_{i})\ \ \text{and}\ \ x - (\epsilon_i^{d},\epsilon_{i}^{d-1},\ldots,\epsilon_{i}),$$
with $\epsilon_i > 0$ sufficiently small, and then perturbing the set slightly so that $X_{i+1}$ remains also in general position. At each step we have $|X_{i}|=2^{i}$, and the main observation is that
$$\operatorname{mc}(X_{i+1}) \leq \operatorname{mc}(X_i) + \operatorname{mc}(\pi(X_i)),$$
where $\operatorname{mc}(X)$ represents the maximum size of a subset of $X$ in convex position, and $\pi$ is the projection to the hyperplane $x_{d}=0$. We would like to conclude this paper by sharing our belief (also an unpublished conjecture of F\"uredi, cf. \cite{KV03}) that this construction may very well be optimal for all $d \geq 3$, apart from the precise value of the constant $c_d$ in the exponent.

\bigskip

{\bf{Acknowledgements}}. We would like to thank Karim Adiprasito, Boris Bukh, and David Conlon for helpful discussions.

\footnotesize{

\textsc{School of Mathematics, Institute for Advanced Study, Princeton, NJ 08540, USA}


{\it{Email address}}: \href{mailto:cosmin.pohoata@gmail.com}{\nolinkurl{cosmin.pohoata@gmail.com}} 

\bigskip


\textsc{Department of Mathematics, Massachusetts Institute of Technology, Cambridge, MA 02139, USA}

{\it{Email address}}: \href{mailto:zakharov2k@gmail.com}{\nolinkurl{zakharov2k@gmail.com}}
}

\end{document}